\newcommand*{\NN}{\mathbb{N}}
\newcommand*{\ZZ}{\mathbb{Z}}
\newcommand*{\QQ}{\mathbb{Q}}
\newcommand*{\RR}{\mathbb{R}}
\newcommand*{\define}[1]{\textbf{#1}}
\DeclareMathOperator{\im}{im}
\DeclareMathOperator{\supp}{supp}
\DeclarePairedDelimiterX\intervC[2]{[}{]}{#1, #2}
\DeclarePairedDelimiterX\intervO[2]{]}{[}{#1, #2}
\DeclarePairedDelimiterX\intervCO[2]{[}{[}{#1, #2}
\DeclarePairedDelimiterX\intervOC[2]{]}{]}{#1, #2}
\DeclarePairedDelimiterX\intervZ[2]{\llbracket}{\rrbracket}{#1, #2}
\DeclarePairedDelimiter\abs{\lvert}{\rvert}
\DeclarePairedDelimiterX\norm[1]{\lVert}{\rVert}{#1}
\DeclarePairedDelimiterX\group[2]{\langle}{\rangle}{#1 \mid #2}
\DeclarePairedDelimiter\groupe{\langle}{\rangle}
\DeclarePairedDelimiterX\set[2]{\{}{\}}{#1\:\delimsize\vert\:\mathopen{}#2}
\newcommand\colonEquiv{\mathrel{\vcentcolon\Leftrightarrow}}
\theoremstyle{plain}
\newtheorem{theorem}{Theorem}[section]
\newtheorem{proposition}[theorem]{Proposition}
\newtheorem{lemma}[theorem]{Lemma}
\newtheorem{corollary}[theorem]{Corollary}
\theoremstyle{definition}
\newtheorem{definition}[theorem]{Definition}
\newtheorem{notation}[theorem]{Notation}
\newtheorem{example}[theorem]{Example}
\theoremstyle{remark}
\newtheorem{remark}[theorem]{Remark}
\title{On the magnitude homology of metric spaces}
\date{\today}
\author{Beno\^{\i}t Jubin}
\begin{document}

\maketitle

\begin{abstract}
Magnitude homology of enriched categories, and in particular of metric spaces, was recently introduced by T.~Leinster and M.~Shulman.
In this article, we prove that metric spaces satisfying a reasonably mild condition have vanishing magnitude homology groups in nonzero degrees.
\end{abstract}

\section*{Introduction}
\label{sec:intro}

In~\cite{LS}, T.~Leinster and M.~Shulman introduced the \emph{magnitude homology} of certain enriched categories.
This magnitude homology is in particular well-defined for metric spaces, viewed as $\intervCO{0}{+\infty}$-enriched categories.
For a metric space $X$, they completely describe the magnitude homology groups $H_0(X)$ and $H_1(X)$.
They also give two sufficient conditions to ensure $H_2(X) = 0$.
In this article, we prove that one of these conditions, namely being Menger-convex geodetic cut-free (terms defined below), actually ensures that $H_n(X) = 0$ for $n \neq 0$.
This is for instance the case for convex subsets of $\RR^d$ with Euclidean metric and for complete Riemannian manifolds with empty cut-locus.

The article is organized as follows.
After recalling some background material and setting notation in Section~\ref{sec:background}, we define the magnitude homology of metric spaces in Section~\ref{sec:homology} and we study the cases of degrees~0 and~1 in Section~\ref{sec:degrees01}, all of which was already done in~\cite{LS}.
In Section~\ref{sec:simple}, we study the special case of simple chains in the magnitude complex.
We then recall the two important notions introduced in~\cite{LS} of \emph{cut-free} (Section~\ref{sec:cut-free}) and \emph{geodetic} (Section~\ref{sec:geodetic}) spaces, give some characterizations, and prove two important properties (respectively, a decomposition of the magnitude complex, and an ordering of points on segments).
Section~\ref{sec:acyclic} contains the main result of the article: acyclicity of Menger-convex geodetic cut-free spaces.
Finally, we study in Section~\ref{sec:riem} the case of complete Riemannian manifolds.

\paragraph{Acknowledgments}
I would like to thank Michael Shulman, who brought to my attention the recent preprint~\cite{KY}, as well as Masahiko Yoshinaga.
It turns out that both~\cite{KY} and the present article, written independently, prove the same acyclicity result by using essentially the same first step: a direct sum decomposition of the magnitude complex, and differing in the rest of the proof.\footnote{The notions of ``straight'' and ``crooked'' defined here correspond to ``smooth'' and ``singular'' there.}
The article~\cite{KY} goes further in decomposing the magnitude complex, using tensor products, and gives two other applications, while the present article has some more results about geodetic and cut-free spaces, as well as the Riemannian case.

\paragraph{Conventions and notation}
\begin{itemize}
\item
For $m, n \in \RR$, we set $\intervZ{m}{n} \coloneqq \set{i \in \ZZ}{m \leq i \leq n}$.
\item
Unless otherwise specified, $(X, d)$, or $X$ for short, will denote a metric space, and $n$ will denote a nonnegative integer.
\item
Vector spaces are assumed real.
\item
Connected graphs are considered as metric spaces as follows: the points are the vertices, and the distance between two points is the length (number of edges) of a shortest path connecting them.
\item
Riemannian manifolds are assumed connected, and in particular are metric spaces.
\end{itemize}

\section{Background material}
\label{sec:background}

\subsection{Metric spaces}
\label{subsec:metric}

Let $(X, d)$, or $X$ for short, be a metric space.
A \define{geodesic} (resp.\ \define{local geodesic}) in~$X$ is an isometry (resp.\ a local isometry) from an interval of~$\RR$ with the induced metric to~$X$.

\begin{definition}
A metric space is:
\begin{itemize}
\item
a \define{length space} if $d(x, y) = \inf \{ \ell(c) \mid c \colon [0, 1] \to X, c(0) = x, c(1) = y \}$ for any $x, y \in X$, with obvious notation,
\item
\define{geodesic} if any two points can be connected by a geodesic (\textit{i.e.,} there exists a geodesic containing them in its image),
\item
\define{proper} if its closed balls are compact.
\end{itemize}
\end{definition}

A geodesic space is a length space.
A proper space is complete and locally compact.
Conversely, a complete locally compact length space is geodesic and proper (Hopf--Rinow), and all three hypotheses are necessary in order to obtain either conclusion.

\subsection{Finite sequences in metric spaces}
\label{subsec:sequences}
Let $n \in \NN$.
An \define{$n$-sequence} in~$X$ is a function from $\intervZ{0}{n}$ to~$X$.
An $n$-sequence will be written as $x = (x_0, \ldots, x_n)$.
A (nonempty finite) \define{sequence} is an $m$-sequence for some $m \in \NN$.
The set of sequences in~$X$ is denoted by $X^+ \coloneqq \bigcup_{m \in \NN} X^{m+1}$.
We also call an element of a sequence a \define{vertex}.

The \define{length} of sequences is the function
\begin{gather}
\begin{aligned}
\ell \colon X^+ &\longrightarrow \RR_{\geq 0} \\
x &\longmapsto \sum_{i = 1}^m d(x_{i-1}, x_i) \qquad\text{if $x \in X^{m+1}$}.
\end{aligned}
\end{gather}

An $n$-sequence $x$ is \define{non-stuttering} if $x_{i-1} \neq x_i$ for all $i \in \intervZ{1}{n}$.
Let $i, j \in \intervZ{0}{n}$.
An $n$-sequence $x$ is \define{straight from~$i$ to~$j$} if $d(x_i, x_j) = \sum_{k = i+1}^j d(x_{k-1}, x_k)$ and \define{globally straight} if it is straight from~$0$ to~$n$.
Note that a sequence $x$ is globally straight if and only if $\ell(x) = d(x_0, x_n)$.
Let $k \in \intervZ{1}{n-1}$.
An $n$-sequence is \define{straight at~$k$} if it is straight from~$k-1$ to~$k+1$,
and \define{crooked at~$k$} if it is not straight at~$k$.
For convenience, an $n$-sequence will be assumed to be both straight and crooked at~0 and at~$n$.
A sequence is \define{straight} (resp.\ \define{crooked}) if it is so at all its indices.
Obviously, globally straight implies straight.

\begin{remark}\label{rmk:ambig}
The phrase ``$x$ is straight at~$x_i$'' is ambiguous, since the point $x_i$ can appear as a vertex of $x$ at different indices.
\end{remark}

\begin{notation}
We will also write \emph{non-stuttering} sequences using concatenation.
In particular, if a sequence is written using concatenation, this will imply that it is non-stuttering.

If $(x_0, x_1, x_2)$ is straight (resp.\ crooked) at $1$, then we write ``$(x_0, \bar{x}_1, x_2)$'' (resp.\ ``$(x_0, \check{x}_1, x_2)$'') both to express this fact and to denote that sequence.
This defines two complementary ternary relations on $X$ which are symmetric in their first and third variables.
If $(x_0, \bar{x}_1, x_2)$ (resp.\ $x_0 \bar{x}_1 x_2$), then we say that $x_1$ is \define{between} (resp.\ \define{strictly between}) $x_0$ and $x_2$.
Betweenness is a closed relation (\textit{i.e.} its graph is closed in $X^3$).

This notation is adapted to concatenation.
For instance, the expression $x_0 \check{x}_1 \overline{x_2 x_3} x_4$ both denotes the 4-sequence $(x_0, x_1, x_2, x_3, x_4)$ and expresses the fact that it is non-stuttering, crooked at~1, and straight from~1 to~4.
\end{notation}

The following lemma gathers some elementary properties of crooked and straight sequences in metric spaces that we will use throughout.

\begin{lemma}\label{lem:metric}
In a metric space,
\begin{align*}
( x_0 \bar{x}_1 x_2 \text{ and } x_0 \bar{x}_2 x_3 )
&\text{ implies }
x_0 \overline{x_1 x_2} x_3,\\
( x_1 \bar{x}_2 x_3 \text{ and } x_0 \bar{x}_1 x_3 )
&\text{ implies }
x_0 \overline{x_1 x_2} x_3,\\
(x_0, \check{x}_1, \bar{x}_2, x_3)
&\text{ implies }
x_0 \check{x}_1 x_3,\\
(x_0, \bar{x}_1, \check{x}_2, x_3)
&\text{ implies }
x_0 \check{x}_2 x_3.
\end{align*}

Consecutive subsequences of straight (resp.\ crooked) sequences are straight (resp.\ crooked).
Subsequences of globally straight sequences are globally straight.
If the $n$-sequence $x$ is crooked and non-stuttering and $n \geq 1$, then the $(n+1)$-sequences $x x_{n-1}$ and $x_1 x$ are crooked.
\end{lemma}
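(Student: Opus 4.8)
The plan is to translate every clause into the triangle inequality and then chase equalities and strict inequalities. The single observation that makes this mechanical is that, since $d(x_{k-1},x_{k+1}) \le d(x_{k-1},x_k) + d(x_k,x_{k+1})$ always holds, being straight at~$k$ means this is an equality while being crooked at~$k$ means it is a \emph{strict} inequality. Likewise ``straight from~$i$ to~$j$'' (and hence each overlined block) unpacks to the telescoping identity $d(x_i,x_j) = \sum_{k=i+1}^{j} d(x_{k-1},x_k)$, while the concatenation decorations additionally record non-stuttering. With this dictionary, each line of the lemma becomes a one- or two-step computation.

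For the first two implications I would substitute one betweenness equality into the other. In the first, $x_0 \bar{x}_2 x_3$ gives $d(x_0,x_3) = d(x_0,x_2) + d(x_2,x_3)$ and $x_0 \bar{x}_1 x_2$ gives $d(x_0,x_2) = d(x_0,x_1)+d(x_1,x_2)$; combining them yields $d(x_0,x_3) = d(x_0,x_1)+d(x_1,x_2)+d(x_2,x_3)$, which is straightness from~$0$ to~$3$, with non-stuttering inherited from the hypotheses. The second is the mirror image, expanding $d(x_0,x_3)$ through $x_0 \bar{x}_1 x_3$ and then $x_1 \bar{x}_2 x_3$. The third and fourth implications follow the same shape but chain a plain triangle inequality with the strict one furnished by the crooked hypothesis: for the third, $d(x_0,x_3) \le d(x_0,x_2)+d(x_2,x_3) < \bigl(d(x_0,x_1)+d(x_1,x_2)\bigr)+d(x_2,x_3) = d(x_0,x_1)+d(x_1,x_3)$, where the strict step is crookedness at~$1$ and the last equality is straightness at~$2$; this is exactly $x_0 \check{x}_1 x_3$, and strictness itself forces $x_0 \ne x_1$ and $x_1 \ne x_3$. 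The fourth is symmetric.

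The clause on consecutive subsequences needs no work: straightness or crookedness at an interior index of a consecutive subsequence is literally the same condition at the matching index of~$x$, and the endpoint convention covers the two ends. The clause on arbitrary subsequences of a globally straight sequence is the only genuinely substantive point, and I would isolate it as an auxiliary fact: if $\ell(x) = d(x_0,x_n)$, then $d(x_a,x_b) = \sum_{k=a+1}^{b} d(x_{k-1},x_k)$ for all $a \le b$. This follows by a squeeze: $d(x_0,x_n) \le d(x_0,x_a) + d(x_a,x_b) + d(x_b,x_n)$ by the triangle inequality, each summand is bounded by the corresponding partial sum of consecutive distances, and these partial sums total $\ell(x) = d(x_0,x_n)$, so every inequality must be an equality. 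Summing this identity over the gaps of a subsequence $(x_{i_0},\ldots,x_{i_p})$ gives $\sum_{k=1}^{p} d(x_{i_{k-1}},x_{i_k}) = d(x_{i_0},x_{i_p})$, i.e.\ global straightness of the subsequence.

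For the last assertion, at every index of $x x_{n-1}$ or $x_1 x$ inherited from~$x$ the governing triple is unchanged, so crookedness there is immediate; only the single index carrying the repeated vertex is new. At that index the putative straightness identity reads $d(x_{n-1},x_{n-1}) = 2\,d(x_{n-1},x_n)$ for $x x_{n-1}$ (resp.\ $d(x_1,x_1) = 2\,d(x_0,x_1)$ for $x_1 x$), whose left-hand side is~$0$ while the right-hand side is positive because $x$ is non-stuttering; hence the new index is crooked as well. The only things to watch throughout are the bookkeeping of which triple sits at which index after concatenation, and the convention that endpoints count as both straight and crooked; the single step requiring an actual argument rather than substitution is the squeeze establishing the partial telescoping identity.
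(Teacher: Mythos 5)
Your proof is correct and follows essentially the same route as the paper: everything is unfolded into triangle (in)equalities, with the later claims reduced to the first by symmetry. The only cosmetic differences are that the paper obtains the third implication by contraposition from the second rather than by your direct strict-inequality chain, and it dismisses the subsequence statements (including the squeeze argument you spell out for globally straight subsequences) as straightforward.
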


\begin{proof}
The first statement is a straightforward consequence of the definitions.
The second statement is deduced from the first by reversal, and similarly the fourth from the third.
The third is obtained from the first by contraposition: if $(x_0, \bar{x}_1, x_3)$, then together with the hypothesis $(x_1, \bar{x}_2, x_3)$, it implies by the second statement that $(x_0, \bar{x}_1, x_2)$, which is not the case.

The statements about subsequences are straightforward.
\end{proof}

\begin{remark}
On the other hand, $x_0 \bar{x}_1 \bar{x}_2 x_3$ need not imply $(x_0, \bar{x}_1, x_3)$ nor $(x_0, \bar{x}_2, x_3)$.
This property will be the defining property of \emph{cut-free} metric spaces defined below (Definition~\ref{def:cut-free}).
\end{remark}

\section{Magnitude homology of metric spaces}
\label{sec:homology}

Let $X$ be a metric space.
Let $n \in \NN$.

A \define{simple $n$-chain} in~$X$ is a non-stuttering $n$-sequence in~$X$.
The set of simple $n$-chains is denoted by $X^{\underline{n+1}}$.
An \define{$n$-chain} in~$X$ is an element of the free abelian group generated by the simple $n$-chains in~$X$.
The group of $n$-chains in~$X$ is denoted by
\begin{equation}
C_n(X) \coloneqq \ZZ \: X^{\underline{n+1}}.
\end{equation}

The \define{boundary map} is given by the alternating sum $d_n \coloneqq \sum_{i = 1}^{n-1} (-1)^i d_n^i \colon C_n(X) \to C_{n-1}(X)$ where the face map $d_n^i$ is defined on simple $n$-chains by $d_n^i (x_0 \cdots x_n) \coloneqq x_0 \cdots \hat{x}_i \cdots x_n$ if $x$ is straight at~$i$ and~0 else, or more compactly
\begin{equation}
d_n^i (x_0 \cdots x_n) \coloneqq x_0 \cdots \hat{\bar{x}}_i \cdots x_n,
\end{equation}
and extended by linearity.
It is convenient to set $d_n^0 = d_n^n \coloneqq 0$.
The boundary of a chain is indeed a chain: if $d_n^ix \neq 0$, then the strict betweenness condition implies $x_{i-1} \neq x_{i+1}$.

The \define{magnitude complex} of $X$ is the complex $(C_\bullet(X), d_\bullet)$ and the \define{magnitude homology} of $X$ is the cohomology of the magnitude complex.
One writes as usual the subgroups of $n$-cycles $Z_n(X) \coloneqq \ker d_n$ and $n$-boundaries $B_n(X) \coloneqq \im d_{n+1}$, and the $n^{\text{th}}$-homology group $H_n(X) \coloneqq Z_n(X) / B_n(X)$.

There is a \define{grading} given by the length of simple chains.
The boundary maps preserving the length, this gives a grading of the homology groups.
We will not use this grading in the rest of this article.

\begin{remark}
As explained in~\cite[Lem.~7.1]{LS}, these definitions are actually the translations in the particular case of metric spaces of the general definitions of magnitude homology for enriched categories.
\end{remark}

\section{Degrees 0 and 1 and Menger-convexity}
\label{sec:degrees01}

The following computations of the zeroth and first homology groups constitute Theorems~7.2 and~7.4 respectively of~\cite{LS}.

Since $d_0 = 0$, one has $Z_0(X) = C_0(X)$.
Since $d_1 = 0$, one has $B_0(X) = 0$, so $H_0(X) = Z_0(X)$.
Therefore,
\begin{equation}
H_0(X) = \ZZ \: X,
\end{equation}
the free abelian group generated by the points of $X$.

Since $d_1 = 0$, one has $Z_1(X) = C_1(X)$.
Since $d_2 = -d_2^1$, one has
\begin{equation*}
B_1(X)
= \groupe{d(x_0 x_1 x_2)}
= \groupe{x_0 \hat{\bar{x}}_1 x_2}
= \group{x_0 x_1}{\exists z \; x_0 \bar{z} x_1}.
\end{equation*}
Therefore,
\begin{equation}\label{eq:H_1}
H_1(X) = \group*{x_0 x_1}{\not\exists z \; x_0 \bar{z} x_1}.
\end{equation}
We recall the following classical definition.

\begin{definition}\label{def:Menger}
A metric space is \define{Menger-convex} if strictly between any two distinct points, there exists a third point.
\end{definition}

\begin{remark}
A Menger-convex space with at least two points has infinitely many points, so the only Menger-convex connected graph is the singleton.
A geodesic space is Menger-convex.
Conversely, a complete Menger-convex space is geodesic.

An open subset of a geodesic space is Menger-convex.
A convex subset of a normed vector space is Menger-convex.
A closed subset of a strictly convex normed vector space is Menger-convex if and only if it is convex.
\end{remark}

\begin{proposition}[{\cite[Cor.~7.6]{LS}}]\label{prop:H_1}
A metric space $X$ is Menger-convex if and only if $H_1(X) = 0$.
\end{proposition}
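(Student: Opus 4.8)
The plan is to read the equivalence straight off the computation of $H_1(X)$ already recorded in~\eqref{eq:H_1}. That formula presents $H_1(X)$ as the free abelian group on the set of simple $1$-chains $x_0 x_1$ for which there is no point strictly between $x_0$ and $x_1$; call this set $P$. So the whole statement reduces to a triviality criterion for a free abelian group whose basis is indexed by $P$.

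First I would recall briefly why $H_1(X)$ is free on $P$: one has $Z_1(X) = C_1(X)$, which is free abelian on the simple $1$-chains, i.e.\ on the pairs of distinct points, and $B_1(X)$ is the subgroup generated by exactly those basis elements $x_0 x_1$ admitting some $z$ with $x_0 \bar{z} x_1$. Since these generators form a \emph{subset} of a free basis, the quotient $H_1(X) = Z_1(X)/B_1(X)$ is again free abelian, on the complementary basis elements — precisely those in $P$. This is the content of~\eqref{eq:H_1}, so it may simply be invoked.

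Then I would use the elementary fact that a free abelian group vanishes if and only if its basis is empty. Hence $H_1(X) = 0$ if and only if $P = \emptyset$, that is, if and only if no pair of distinct points fails to have a point strictly between them. Unwinding the quantifiers, $P = \emptyset$ says exactly that for every pair of distinct points $x_0, x_1$ there exists $z$ with $x_0 \bar{z} x_1$, which by Definition~\ref{def:Menger} is the assertion that $X$ is Menger-convex. As the criterion $H_1(X) = 0 \iff P = \emptyset \iff \text{($X$ Menger-convex)}$ is a chain of equivalences, both implications drop out simultaneously.

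I do not expect a genuine obstacle: granted~\eqref{eq:H_1}, the proposition is a one-line unwinding of definitions. The only step deserving any care is the passage from the presentation of $H_1(X)$ to its freeness on $P$ — the observation that the boundary generators lie among the free basis of $C_1(X)$, so that the quotient stays free on the leftover basis — but this is already absorbed into~\eqref{eq:H_1} and need not be reproved here.
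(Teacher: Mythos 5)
Your proof is correct and takes essentially the same route as the paper, whose entire proof is that the proposition ``follows directly from the computation~\eqref{eq:H_1}''. Your elaboration --- that $B_1(X)$ is generated by a subset of the free basis of $Z_1(X) = C_1(X)$, so $H_1(X)$ is free abelian on the complementary basis elements and hence vanishes exactly when that set is empty, which is Menger-convexity --- is precisely the unwinding the paper leaves implicit.
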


\begin{proof}
This follows directly from the above computation~\eqref{eq:H_1} of $H_1(X)$.
\end{proof}

\section{Crooked chains and the properties $(*_n)$}
\label{sec:simple}

In this section, we treat separately the case of crooked simple chains.

\begin{remark}
The case of an empty (resp.\ singleton) metric space is very particular since such a space does not have simple $n$-chains for $n \geq 1$ (resp.\ $n \geq 2$).
The following results trivially hold for these spaces, even if the proofs generally assume the existence of crooked simple $n$-chains for any $n \in \NN$ (which holds in metric spaces with at least two points).
\end{remark}

\begin{lemma}\label{lem:no-cancel}
 Let $n \in \NN$ and $i, j \in \intervZ{0}{n}$.
 If $x$ is a simple $n$-chain and $i \neq j$, then $\ZZ \: d_n^i x \cap \ZZ \: d_n^j x = \{0\}$.
\end{lemma}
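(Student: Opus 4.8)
The plan is to observe that each face $d_n^i x$ is either zero or a single simple $(n-1)$-chain, that is, a basis element of $C_{n-1}(X)$, and to reduce the statement to the claim that when both faces are nonzero they are \emph{distinct} basis elements. First I would dispose of the degenerate case: if $d_n^i x = 0$ or $d_n^j x = 0$, then one of the two subgroups $\ZZ \, d_n^i x$, $\ZZ \, d_n^j x$ is $\{0\}$, so their intersection is $\{0\}$. Assume therefore that both faces are nonzero. By the remark following the definition of the boundary map, a nonzero face is again a simple chain, hence a generator of the free abelian group $C_{n-1}(X)$. In a free abelian group, the cyclic subgroups generated by two basis elements meet only in $0$ unless the basis elements coincide; thus it suffices to prove that $d_n^i x \neq d_n^j x$ as $(n-1)$-sequences.

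To compare the two sequences, I would assume without loss of generality that $i < j$ and write out their values entrywise. Deleting the vertex at position $i$ yields, at new index $k$, the value $x_k$ for $k < i$ and $x_{k+1}$ for $k \geq i$; deleting position $j$ yields $x_k$ for $k < j$ and $x_{k+1}$ for $k \geq j$. Hence the two sequences agree for all indices $k < i$ and all indices $k \geq j$, and can differ only for $k \in \intervZ{i}{j-1}$. At the index $k = i$, which lies in this range since $i < j$, the first sequence has the value $x_{i+1}$ while the second has $x_i$. Because $x$ is a simple chain, it is non-stuttering, so $x_i \neq x_{i+1}$, and the two sequences differ at index $i$. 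Therefore $d_n^i x$ and $d_n^j x$ are distinct generators, and $\ZZ \, d_n^i x \cap \ZZ \, d_n^j x = \{0\}$.

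The statement is elementary once this viewpoint is fixed, and the only point requiring care — the crux of the argument — is the following: because a non-stuttering sequence may repeat vertices at non-consecutive positions (for instance $x_0 = x_2$), one cannot conclude that the two faces differ merely by comparing their \emph{sets} of vertices. The argument instead exploits the positional shift introduced by deleting a vertex, together with the non-stuttering condition applied precisely at the boundary index $k = i$; this is the one place where simplicity of the chain is genuinely used.
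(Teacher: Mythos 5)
Your proof is correct and follows essentially the same route as the paper: both reduce the statement to showing that nonzero faces at distinct indices are distinct basis elements of the free abelian group $C_{n-1}(X)$, and both establish this by comparing the two deleted sequences entrywise and invoking the non-stuttering condition (the paper derives $x_k = x_{k+1}$ for all $k \in \intervZ{i}{j-1}$ and concludes that interval is empty, while you point directly at the single index $k = i$ — a negligible difference).
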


\begin{proof}
Let $x$ be a simple $n$-chain.
Suppose that $i \leq j$ and $d_n^i x = d_n^j x \neq 0$.
Then, $1 \leq i \leq j \leq n-1$ and $x_k = x_{k+1}$ for all $k \in \intervZ{i}{j-1}$.
Since $x$ is non-stuttering, this implies that $\intervZ{i}{j-1} = \varnothing$, so $i = j$.
\end{proof}

\begin{lemma}\label{lem:crooked}
A simple chain is a cycle if and only if it is crooked.
\end{lemma}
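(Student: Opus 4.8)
The plan is to prove the two implications separately. The forward implication (crooked $\Rightarrow$ cycle) is essentially a direct unwinding of the definition of the face maps, while the reverse implication (cycle $\Rightarrow$ crooked) is where the content lies, and I would reduce it to the no-cancellation property already established in Lemma~\ref{lem:no-cancel}.

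For the easy direction, I would assume $x$ is crooked and observe that, by definition of \emph{crooked}, the sequence $x$ is straight at no interior index $i \in \intervZ{1}{n-1}$. Since the compact definition of the face map sets $d_n^i x = 0$ precisely when $x$ fails to be straight at~$i$, every face $d_n^i x$ vanishes, and therefore $d_n x = \sum_{i=1}^{n-1} (-1)^i d_n^i x = 0$. The degenerate cases $n = 0$ and $n = 1$ hold trivially, as there are no interior indices and $d_n = 0$ in those degrees.

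For the converse, I would assume $x$ is a cycle, so that $\sum_{i=1}^{n-1} (-1)^i d_n^i x = 0$ holds in the free abelian group $C_{n-1}(X)$. Each summand $d_n^i x$ is either $0$ or a simple $(n-1)$-chain, that is, a basis element of $C_{n-1}(X)$. The crucial point is that these nonzero faces cannot cancel each other: by Lemma~\ref{lem:no-cancel}, for $i \neq j$ the subgroups $\ZZ \, d_n^i x$ and $\ZZ \, d_n^j x$ intersect only in $\{0\}$, which forces any two nonzero faces to be distinct basis elements. Thus the nonzero terms of the alternating sum are signed distinct basis elements, hence $\ZZ$-linearly independent, and the sum can vanish only if each term already vanishes. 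I would conclude that $d_n^i x = 0$ for all $i \in \intervZ{1}{n-1}$, meaning $x$ is straight at no interior index, i.e.\ $x$ is crooked.

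The main obstacle, and the only genuinely non-formal step, is the no-cancellation claim: one must rule out the possibility that two distinct faces $d_n^i x$ and $d_n^j x$ coincide and cancel in the boundary. This is exactly what Lemma~\ref{lem:no-cancel} supplies, so the difficulty has in effect been isolated and dispatched there; once it is invoked, the rest of the argument rests only on the freeness of $C_{n-1}(X)$ as an abelian group and is entirely routine.
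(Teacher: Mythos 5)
Your proof is correct and follows essentially the same route as the paper: the forward direction is the same direct computation, and the converse uses Lemma~\ref{lem:no-cancel} in exactly the way the paper does (the paper phrases it contrapositively --- a straight index yields an uncancelled nonzero face --- while you argue directly from the vanishing of the boundary, but the substance is identical).
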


\begin{proof}
Let $x$ be a simple $n$-chain.
Let $i \in \intervZ{1}{n-1}$.
If $x$ is crooked at~$i$, then $d_n^i x = 0$.
Therefore, if $x$ is crooked, then $dx = \sum_{i=1}^{n-1} (-1)^i d_n^i x = 0$, so $x$ is a cycle.

Conversely, if $x$ is a 0-sequence or a 1-sequence, then it is crooked, so we suppose $n \geq 2$.
Let $i \in \intervZ{1}{n-1}$.
If $x$ is straight at~$i$, then $d_n^i x \neq 0$ and it is not cancelled by any other $d_n^j x$ by Lemma~\ref{lem:no-cancel}.
\end{proof}

\begin{definition}\label{def:prop*}
We define the following properties of a metric space~$X$.
\begin{itemize}
\item
Property~\define{$(*{*}*)$}:
for any $x_0 \check{x}_1 \check{x}_2 x_3$, there exists $z \in X$ such that $x_0 \check{x}_1 \bar{z} \check{x}_2 x_3$.
\item
Property~\define{$(**)$}:
for any $x_0 \check{x}_1 x_2$, there exists $z \in X$ such that $x_0 \bar{z} \check{x}_1 x_2$.
\item
Property~\define{$(*_n)$}, $n \geq 1$:
for any crooked simple $n$-chain $x$, there exist $z \in X$ and $i \in \intervZ{1}{n}$ such that $x_0 \cdots \check{x}_{i-1} \bar{z} \check{x}_i \cdots x_n$.
\end{itemize}
\end{definition}

Note that property $(*_1)$ is Menger-convexity.

\begin{proposition}\label{prop:crooked}
Let $n \geq 1$.
A space has property  $(*_n)$ if and only if every crooked simple $n$-chain is a boundary.
\end{proposition}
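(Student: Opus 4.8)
The plan is to translate property $(*_n)$ into a statement about the boundary map $d_{n+1}$ via the operation of inserting a straight point into a crooked chain. The forward implication is a direct boundary computation, and the converse splits into an easy extraction step and a harder promotion step.

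For the forward implication, let $x$ be a crooked simple $n$-chain and let $z$ and $i$ be as provided by $(*_n)$, so that $y \coloneqq x_0\cdots\check x_{i-1}\bar z\check x_i\cdots x_n$ is a simple $(n+1)$-chain. I would show that $d_{n+1}y=(-1)^i x$. Indeed, $y$ is straight only at the inserted index $i$: at $z$ it is straight by construction, at the two neighbouring indices it is crooked by the conditions $\check x_{i-1}$ and $\check x_i$, and at every other index the relevant triple is unchanged from $x$, hence crooked since $x$ is crooked. Therefore $d_{n+1}^k y=0$ for $k\neq i$ while $d_{n+1}^i y=x$, and by Lemma~\ref{lem:no-cancel} this single surviving face is not cancelled. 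Thus $x=(-1)^i d_{n+1}y$ is a boundary.

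For the converse, suppose every crooked simple $n$-chain is a boundary and fix such an $x$, writing $x=d_{n+1}c$ with $c=\sum_y a_y\,y$ a sum over simple $(n+1)$-chains. I would first extract a straight insertion by comparing coefficients of $x$. A simple $(n+1)$-chain $y$ contributes $x$ to $d_{n+1}y$ precisely when $y$ is obtained from $x$ by inserting a point $z$ in some gap $i$ with $x_{i-1}\bar z x_i$; in that case, again by Lemma~\ref{lem:no-cancel}, $x$ occurs in $d_{n+1}y$ with coefficient $(-1)^i$ only. Since the coefficient of $x$ in $d_{n+1}c$ equals $1$, at least one such straight insertion $y$ lies in the support of $c$, producing $z$ and $i$ with $z$ strictly between $x_{i-1}$ and $x_i$.

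The remaining step---upgrading this straight insertion to a \emph{good} one, i.e.\ arranging that the inserted chain is also crooked at the neighbours $x_{i-1}$ and $x_i$---is where I expect the real difficulty to lie, since in a general metric space $z$ strictly between $x_{i-1}$ and $x_i$ does not by itself prevent $(x_{i-2},x_{i-1},z)$ from being straight even though $x$ is crooked at $i-1$. To treat a bad insertion, say one with $x_{i-2}\bar x_{i-1}z$, I would exploit Lemma~\ref{lem:metric}: combining $x_{i-2}\bar x_{i-1}z$ with $x_{i-1}\bar z x_i$ and the crookedness of $x$ at $i-1$ forces $(x_{i-2},\check z,x_i)$ by the first implication, and (for $i\geq 3$) the second implication propagates crookedness one step further to the left, at $x_{i-2}$. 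Deleting $x_{i-1}$ then yields a new simple $n$-chain $x'$ (namely $x$ with $x_{i-1}$ replaced by $z$) whose face appears in $d_{n+1}c$ and, being distinct from $x$, must cancel, hence arises from another insertion in the support of $c$. I would organise this as an extremal argument: among the straight insertions occurring in $c$, choose one minimising the number of indices at which it is straight, and use the propagation above together with the forced cancellation of $x'$ to contradict minimality unless the chosen insertion is good. The crux is to make this monovariant genuinely decrease, and in particular to control the neighbour on the $x_i$-side, where the naive leftward propagation fails---this asymmetry is precisely the cut-free phenomenon isolated in the Remark following Lemma~\ref{lem:metric}---so this is the step I would expect to require the most care.
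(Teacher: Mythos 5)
Your forward implication is correct, and the first half of your converse is also correct: both coincide with what the paper actually does (it dismisses the forward direction as ``obvious'', and its sufficiency argument begins with exactly your coefficient extraction). The genuine gap is the step you flag yourself: promoting a straight insertion, i.e.\ some $y$ in the support of $c$ with $x_{i-1}\bar{z}x_i$ and $d_{n+1}^i y = x$, to a \emph{good} one, crooked at $x_{i-1}$ and at $x_i$. Your extremal argument remains a sketch --- the monovariant is never actually defined, and you concede you cannot control the neighbour on the $x_i$-side --- so the proposal does not prove the proposition.

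You should know, however, that the step you could not close is precisely the step the paper's own proof asserts with no justification: having found $y$ with a face equal to $x$, it simply declares that $y$ has the form $x_0\cdots\check{x}_{i-1}\bar{z}\check{x}_i\cdots x_n$. That inference is false as a chain-by-chain statement. Take five points $a,p,b,q,c$ with $d(a,p)=d(p,b)=d(b,c)=d(p,q)=d(q,c)=d(b,q)=1$, $d(a,b)=d(p,c)=d(a,c)=2$, and $d(a,q)=3/2$; all triangle inequalities hold. Here $(a,b,c)$ is crooked and admits no good insertion: the only point strictly between $a$ and $b$ is $p$, and $(p,b,c)$ is straight, while nothing lies strictly between $b$ and $c$. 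Nevertheless $(a,b,c)$ is a boundary: since $(a,p,q)$ is crooked and $(a,p,b)$, $(p,b,c)$, $(p,q,c)$ are straight, one computes $d\bigl((a,p,q,c)\bigr)=(a,p,c)$ and $d\bigl((a,p,b,c)\bigr)=(a,p,c)-(a,b,c)$, hence $(a,b,c)=d\bigl((a,p,q,c)-(a,p,b,c)\bigr)$. So ``$x$ is crooked and a boundary'' does not imply ``$x$ admits a good insertion'', and that is the only implication either your argument or the paper's attempts to use. (This example does not contradict the proposition itself: the space violates the global hypothesis, e.g.\ $(a,p,a)$ is crooked but cannot be a boundary, since no point lies strictly between $a$ and $p$.) Consequently, any correct proof of sufficiency must exploit the hypothesis that \emph{every} crooked simple $n$-chain is a boundary --- in particular the auxiliary chains produced by bad insertions, such as $(a,p,c)$ above --- and not merely that the given $x$ is one. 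Neither your sketch nor the paper's two-line argument does this; your diagnosis of where the real difficulty sits is accurate, and the paper's own proof does not contain the idea needed to close it.
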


\begin{proof}
Necessity is obvious.
As for sufficiency, let $x$ be a crooked simple $n$-chain.
By hypothesis, it is a boundary, so there exists an almost zero family of integers $(a_y)_{y \in X^{\underline{n+2}}}$ such that $x = d (\sum_y a_y \: y) = \sum_y a_y \sum_{i=1}^{n-1} (-1)^i \: y_0 \cdots \hat{\bar{y}}_i \cdots y_{n+1}$.
Therefore, the sum contains at least one $y$ of the form $x_0 \cdots \check{x}_{i-1} \bar{z} \check{x}_i \cdots x_n$ for some $z \in X$ and $i \in \intervZ{1}{n}$.
\end{proof}

We also introduce a ``discrete analog'' of geodesicy, which is weaker than geodesicy but sufficient for our purposes.

\begin{definition}\label{def:strMenger}
A metric space $X$ is \define{strongly Menger} if there exists $\alpha > 0$ such that for all $x, y \in X$, there exists $z$ between $x$ and $y$ such that $d(x, z) \geq \alpha \, d(x, y)$.
\end{definition}

A geodesic space is strongly Menger (but not conversely, as $\QQ$ with the standard metric shows).
One easily checks that if $X$ is strongly Menger, then for all $x, y \in X$ and all $s, t \in \intervC{0}{1}$ with $s < t$, there exists $z$ between $x$ and $y$ such that $s \, d(x, y) \leq d(x, z) \leq t \, d(x, y)$.

\begin{proposition}\label{prop:prop*}
Let $n \geq 1$.
The following properties are listed in order of decreasing strength.
\begin{enumerate}
\item
property~$(*{*}*)$,
\item
property~$(**)$,
\item
property~$(*_n)$,
\item
Menger-convexity.
\end{enumerate}
Furthermore, strong Menger-convexity implies property~$(**)$.
\end{proposition}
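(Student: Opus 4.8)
The plan is to establish the chain $(*{*}*) \Rightarrow (**) \Rightarrow (*_n) \Rightarrow \text{Menger-convexity}$ together with the final claim, in each case by producing an explicit auxiliary sequence, feeding it to the stronger hypothesis, and verifying the straight/crooked decorations of the resulting sequence. For $(*{*}*) \Rightarrow (**)$, I would start from a crooked $2$-chain $x_0 \check{x}_1 x_2$ and apply $(*{*}*)$ to the auxiliary sequence $(x_1, x_0, x_1, x_2)$. It is non-stuttering and crooked at both inner indices: the triple $(x_1, x_0, x_1)$ is crooked because $d(x_1, x_1) = 0 \neq 2\,d(x_0, x_1)$, and the triple $(x_0, x_1, x_2)$ is the given crooked triple. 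Property $(*{*}*)$ then inserts $z$ with $x_0 \bar{z} x_1$ and $(z, x_1, x_2)$ crooked, which is exactly the conclusion $x_0 \bar{z} \check{x}_1 x_2$ demanded by $(**)$.

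For $(**) \Rightarrow (*_n)$ with $n \geq 2$, the leading triple $x_0 \check{x}_1 x_2$ of a crooked simple $n$-chain $x$ is crooked, so $(**)$ yields $z$ with $x_0 \bar{z} \check{x}_1 x_2$. I would then form the $(n+1)$-sequence $(x_0, z, x_1, \ldots, x_n)$ and check that it is crooked at every inner index except at $z$, where it is straight: straightness at $z$ and crookedness at $x_1$ come from $(**)$, while crookedness at each remaining inner vertex is inherited from $x$. This is an instance of $(*_n)$ with $i = 1$. The case $n = 1$ is Menger-convexity, which I would get from $(**)$ directly by applying it to the crooked triple $(x_0, x_1, x_0)$, whose conclusion already provides a point strictly between $x_0$ and $x_1$.

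For $(*_n) \Rightarrow \text{Menger-convexity}$, given distinct points $x_0, x_1$, I would apply $(*_n)$ to the alternating simple $n$-chain $(x_0, x_1, x_0, x_1, \ldots)$, which is non-stuttering and crooked at every inner index since each of its inner triples is $(x_0, x_1, x_0)$ or $(x_1, x_0, x_1)$. The pair $(z, i)$ returned by $(*_n)$ satisfies $x_{i-1} \bar{z} x_i$, and the consecutive vertices $x_{i-1}, x_i$ are precisely $x_0$ and $x_1$ in some order, so $z$ lies strictly between $x_0$ and $x_1$.

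Finally, for strong Menger-convexity $\Rightarrow (**)$, the key step is an estimate rather than a combinatorial construction. From a crooked triple $x_0 \check{x}_1 x_2$, set $\delta \coloneqq d(x_0, x_1) + d(x_1, x_2) - d(x_0, x_2) > 0$. For $z$ between $x_0$ and $x_1$ one has $d(z, x_1) = d(x_0, x_1) - d(x_0, z)$ and $d(z, x_2) \leq d(x_0, z) + d(x_0, x_2)$, whence $d(z, x_1) + d(x_1, x_2) - d(z, x_2) \geq \delta - 2\,d(x_0, z)$. Thus any $z$ strictly between $x_0$ and $x_1$ with $d(x_0, z) < \delta/2$ makes $(z, x_1, x_2)$ crooked, giving $x_0 \bar{z} \check{x}_1 x_2$. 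The role of the hypothesis is exactly to furnish such a $z$ near $x_0$: by the consequence of Definition~\ref{def:strMenger} recalled above, choosing $0 < s < t \leq 1$ with $t\,d(x_0, x_1) < \delta/2$ yields $z$ with $s\,d(x_0, x_1) \leq d(x_0, z) \leq t\,d(x_0, x_1)$, which is strictly between $x_0$ and $x_1$ and close enough. I expect this to be the main obstacle, since it both requires spotting the right estimate and explains why ordinary Menger-convexity is insufficient: one needs intermediate points arbitrarily close to $x_0$, a density that only the strong form guarantees.
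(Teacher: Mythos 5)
Your proposal is correct. The chain (1)$\Rightarrow$(2)$\Rightarrow$(3)$\Rightarrow$(4) follows the paper's proof essentially verbatim: the same auxiliary sequence $(x_1, x_0, x_1, x_2)$ for (1)$\Rightarrow$(2), the same insertion of $z$ at position $i = 1$ for (2)$\Rightarrow$(3) with $n \geq 2$, and the same alternating chain $(x_0, x_1, x_0, x_1, \ldots)$ for (3)$\Rightarrow$(4). Two points differ. First, for the case $n = 1$ of (2)$\Rightarrow$(3), the paper routes through the composite (2)$\Rightarrow(3_{n=2})\Rightarrow$(4) together with the identification of $(*_1)$ with Menger-convexity, whereas you apply $(**)$ directly to the crooked triple $(x_0, x_1, x_0)$; your variant is more direct and equally valid. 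Second, and more substantially, for strong Menger-convexity $\Rightarrow (**)$ the paper argues by a limiting argument: it picks $z_n$ between $x_0$ and $x_1$ with $d(x_0, z_n) \leq d(x_0, x_1)/n$, so that $z_n \to x_0$, and invokes the closedness of the betweenness relation to conclude that $(z_n, x_1, x_2)$ cannot be straight for every $n$, since otherwise $(x_0, x_1, x_2)$ would be straight in the limit. You replace this topological step by the explicit estimate $d(z, x_1) + d(x_1, x_2) - d(z, x_2) \geq \delta - 2\,d(x_0, z)$ with $\delta \coloneqq d(x_0, x_1) + d(x_1, x_2) - d(x_0, x_2) > 0$, showing that any point strictly between $x_0$ and $x_1$ at distance less than $\delta/2$ from $x_0$ already works, and then use the quantitative consequence of Definition~\ref{def:strMenger} to produce one. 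Your route is constructive: it avoids sequential convergence and the closedness of betweenness, and it makes explicit exactly how close to $x_0$ the intermediate point must be; the paper's route is shorter to state because closedness of betweenness had already been recorded in the notation section. Both arguments are sound, so this is a matter of taste rather than of substance.
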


\begin{proof}
(1)$\Rightarrow$(2).
Let $x_0 \check{x}_1 x_2$ be a 2-chain.
Applying property~$(*{*}*)$ to the 3-chain $x_1 \check{x}_0 \check{x}_1 x_2$, we obtain $z \in X$ such that $x_1 \check{x}_0 \bar{z} \check{x}_1 x_2$.
In particular, one has $x_0 \bar{y} \check{x}_1 x_2$.

(2)$\Rightarrow$(3).
The case $n=1$ follows from (2)$\Rightarrow(3_{n=2})\Rightarrow$(4).
Let $x$ be a crooked simple $n$-chain with $n \geq 2$.
Applying property~$(**)$ to the 2-chain $x_0 \check{x}_1 x_2$, we obtain $z \in X$ such that $x_0 \bar{z} \check{x}_1 x_2$.
Therefore, $x_0 \bar{z} \check{x}_1 \cdots x_n$.

(3)$\Rightarrow$(4).
Let $x_0, x_1 \in X$ with $x_0 \neq x_1$.
Let $y$ be the $n$-chain defined by $y_{2i} \coloneqq x_0$ and $y_{2i+1} \coloneqq x_1$ for $i \in \intervZ{0}{n/2}$.
Then, $y$ is crooked, so property~$(*_n)$ gives the existence of $z \in X$ and $i \in \intervZ{1}{n}$ such that $y_0 \cdots \check{y}_{i-1} \bar{z} \check{y}_i \cdots y_n$.
Whatever the value of $i$, this yields $x_0 \bar{z} x_1$.

Let $X$ be strongly Menger-convex and let $x_0 \check{x}_1 x_2$ be a crooked simple 2-chain in $X$.
The following argument is similar to an argument in the proof of~\cite[Thm.~7.19]{LS}.
By the remark following the definition of strong Menger-convexity, for any $n \in \NN_{>0}$, there exists $z_n$ between $x_0$ and $x_1$ such that $d(x_0, z_n) \leq d(x_0, x_1)/n$.
Since $(z_n)$ converges to $z$ and betweenness is a closed relation, if $z \bar{x}_1 x_2$ for all $n$, then $x_0 \bar{x}_1 x_2$, which is not.
Therefore, there exists $N \in \NN_{>0}$ such that $x_0 \bar{z}_N \check{x}_1 x_2$.
\end{proof}

\begin{remark}
The Riemannian circle of length $2\pi$ is geodesic without property~$(*{*}*)$, as the crooked simple 3-chain $(0, t, 2t, 3t)$ with $\pi/2 < t < 2 \pi/3$ shows.
The set of rational numbers with the standard metric satisfies property~$(*{*}*)$ but is not a length space.
Complete Menger-convex metric spaces are geodesic, so have property~$(**)$.
The Riemannian circle is such an example.
\end{remark}

\begin{corollary}
If $H_n(X) = 0$ for some $n \geq 1$, then $X$ is Menger-convex.
\end{corollary}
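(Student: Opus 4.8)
The plan is to chain together the two propositions of this section, so that the vanishing of a single homology group is funneled through property~$(*_n)$ down to Menger-convexity. Fix an $n \geq 1$ with $H_n(X) = 0$. The guiding observation is that the crooked simple $n$-chains are exactly the simple chains that are automatically cycles, so they are precisely the chains whose forced boundary-ness (coming from $H_n(X) = 0$) is the content of property~$(*_n)$.

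First I would recall from Lemma~\ref{lem:crooked} that a simple $n$-chain is a cycle if and only if it is crooked. Hence every crooked simple $n$-chain lies in $Z_n(X)$. Since $H_n(X) = Z_n(X) / B_n(X) = 0$, we have $Z_n(X) = B_n(X)$, and so every crooked simple $n$-chain is in fact a boundary.

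Next I would invoke Proposition~\ref{prop:crooked}: because $n \geq 1$ and every crooked simple $n$-chain is a boundary, $X$ has property~$(*_n)$. Finally, Proposition~\ref{prop:prop*} lists property~$(*_n)$ as item~(3), which is stronger than Menger-convexity, item~(4); thus $X$ is Menger-convex, as desired.

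There is no genuine obstacle here, since the substance of the argument is already packaged in Lemma~\ref{lem:crooked} and Propositions~\ref{prop:crooked} and~\ref{prop:prop*}. The only point requiring care is to restrict attention to crooked simple chains rather than to arbitrary cycles: these are exactly the generators guaranteed to be cycles, so the hypothesis $H_n(X) = 0$ applies to them directly and translates, via Proposition~\ref{prop:crooked}, into property~$(*_n)$ with no further work.
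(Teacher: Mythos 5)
Your proof is correct and follows exactly the paper's own argument: crooked simple $n$-chains are cycles by Lemma~\ref{lem:crooked}, hence boundaries since $H_n(X) = 0$, so $X$ has property~$(*_n)$ by Proposition~\ref{prop:crooked}, which implies Menger-convexity by Proposition~\ref{prop:prop*}. No gaps; nothing to add.
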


\begin{proof}
If $H_n(X) = 0$ for some $n \geq 1$, then all crooked simple $n$-chains, which are cycles by Lemma~\ref{lem:crooked}, are boundaries.
Therefore, $X$ has property~$(*_n)$ by Proposition~\ref{prop:crooked}.
By Proposition~\ref{prop:prop*}, this implies that $X$ is Menger-convex.
\end{proof}

\section{Cut-free spaces and decomposition of the magnitude complex}
\label{sec:cut-free}

In this section, we first recall the important notion of cut-freeness introduced in~\cite{LS} under the name ``with no 4-cut''.
We then prove the important fact that in a cut-free space, there is a natural decomposition of the magnitude complex.

\subsection{Cut-free spaces}

\begin{definition}\label{def:cut-free}
A metric space is \define{cut-free} if $x_0 \bar{x}_1 \bar{x}_2 x_3$ implies $x_0 \bar{x}_2 x_3$.
\end{definition}

The term ``cut-free'' should be understood as ``with no (nontrivial) shortcut''.
The defining property of cut-freeness can also be interpreted as follows: removing a vertex where a sequence is straight does not alter straightness at other vertices.
Compare Lemma~\ref{lem:metric}, which implies that removing such a vertex does not alter crookedness at other vertices.
Therefore, in a cut-free space, removing such a vertex (which is what a boundary map $d_n^j$ does, when nonzero) does not alter straight/crookedness at other vertices.
This will be crucial in the proof of Proposition~\ref{prop:decomp}.

The following proposition shows that cut-freeness implies an apparently stronger property.

\begin{proposition}\label{prop:glob-straight}
A metric space is cut-free if and only if all straight simple chains are globally straight.
In a cut-free space, every local geodesic is a geodesic.
A geodesic space where every local geodesic is a geodesic is cut-free.
\end{proposition}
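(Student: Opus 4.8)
The plan is to treat the three assertions in turn, with the first providing the engine for the other two.

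For the equivalence, the direction ``all straight simple chains are globally straight'' $\Rightarrow$ ``cut-free'' is immediate: given $x_0 \bar{x}_1 \bar{x}_2 x_3$, this is a straight simple $3$-chain, hence globally straight, so $d(x_0, x_3) = d(x_0, x_1) + d(x_1, x_2) + d(x_2, x_3)$; combining with straightness at~$1$ (which gives $d(x_0, x_2) = d(x_0, x_1) + d(x_1, x_2)$) yields $x_0 \bar{x}_2 x_3$. For the converse, I would fix a straight simple $n$-chain $x$ and prove by induction on $k \in \intervZ{1}{n}$ that each prefix is globally straight, i.e.\ $d(x_0, x_k) = \sum_{i=1}^k d(x_{i-1}, x_i)$. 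The cases $k = 1$ (trivial) and $k = 2$ (straightness at~$1$) are the base. For the inductive step from $k \geq 2$ to $k+1$, the induction hypothesis makes the prefix up to $x_k$ globally straight, so by Lemma~\ref{lem:metric} its subsequence gives $x_0 \bar{x}_{k-1} x_k$; together with straightness at~$k$, namely $x_{k-1} \bar{x}_k x_{k+1}$, I obtain $x_0 \bar{x}_{k-1} \bar{x}_k x_{k+1}$, and cut-freeness yields $x_0 \bar{x}_k x_{k+1}$, i.e.\ $d(x_0, x_{k+1}) = d(x_0, x_k) + d(x_k, x_{k+1})$, which closes the step. (Distinctness of $x_0$ and $x_{k-1}$, needed to legitimately apply cut-freeness, follows from global straightness of the prefix, all intermediate distances being positive.)

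For the second assertion, let $c \colon I \to X$ be a local geodesic; I must show $d(c(s), c(t)) = t - s$ for all $s < t$ in $I$. Since $c$ is a local isometry, every point of $I$ has a neighborhood on which $c$ preserves distances; by the Lebesgue number lemma applied to the resulting open cover of the compact interval $\intervC{s}{t}$, there is $\delta > 0$ such that $c$ preserves distances on every subinterval of diameter at most $\delta$. Choosing a partition $s = t_0 < \cdots < t_m = t$ of mesh less than $\delta/2$ guarantees that $c$ preserves distances on each $\intervC{t_{j-1}}{t_{j+1}}$, which is exactly straightness of the simple chain $(c(t_0), \ldots, c(t_m))$ at every interior index; the chain is non-stuttering because consecutive images lie at positive distance. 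By the first assertion this chain is globally straight, whence $d(c(s), c(t)) = \sum_{j=1}^m (t_j - t_{j-1}) = t - s$.

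For the third assertion, suppose $X$ is geodesic with every local geodesic a geodesic, and let $x_0 \bar{x}_1 \bar{x}_2 x_3$ be given. I would join the consecutive points by geodesic segments and concatenate them into an arc-length path $\gamma$ through $x_0, x_1, x_2, x_3$. Straightness at~$1$ means the length of the $x_0$-to-$x_2$ portion equals $d(x_0, x_2)$, so that portion is a geodesic; likewise straightness at~$2$ makes the $x_1$-to-$x_3$ portion a geodesic. Since a neighborhood of the junction $x_1$ (resp.\ $x_2$) in $\gamma$ lies inside one of these two geodesic subarcs, $\gamma$ is a local isometry there, and it is trivially so in the interiors of the segments; hence $\gamma$ is a local geodesic. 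By hypothesis $\gamma$ is then a geodesic, so its total length equals $d(x_0, x_3)$, giving $d(x_0, x_3) = d(x_0, x_1) + d(x_1, x_2) + d(x_2, x_3) = d(x_0, x_2) + d(x_2, x_3)$, that is, $x_0 \bar{x}_2 x_3$. The distance bookkeeping is routine; the point requiring care is the junction analysis in this last assertion, where I rely on the elementary fact that an arc-length path whose length equals the distance between its endpoints is automatically distance-preserving, applied to each pair of adjacent geodesic segments to certify local isometry at $x_1$ and $x_2$.
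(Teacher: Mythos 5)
Your proposal is correct and follows essentially the same route as the paper: for the equivalence, an induction on straight simple chains driven by cut-freeness and Lemma~\ref{lem:metric}; for the second claim, discretizing a local geodesic into a straight simple chain and invoking the first claim; and for the third, concatenating geodesics into a local geodesic and applying the hypothesis. The differences are only organizational---you induct on prefixes rather than on the chain length, and you spell out details (non-stuttering checks, the Lebesgue number argument, the junction analysis) that the paper's terser proof leaves implicit.
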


\begin{proof}
For the first claim, sufficiency is obvious.
We prove necessity.
There is nothing to prove for simple $n$-chains with $n \leq 2$.
If $x_0 \bar{x}_1 \bar{x}_2 x_3$, then $x_0 \bar{x}_2 x_3$, and these two conditions imply $x_0 \overline{x_1 x_2} x_3$ by Lemma~\ref{lem:metric}.
Let $n \geq 4$ and proceed by induction on $n$.
If $x_0 \bar{x}_1 \cdots \bar{x}_{n-1} x_n$, then we apply the case $n = 3$ to $x_0 \bar{x}_1 \bar{x}_2 x_3$ to obtain $x_0 \overline{x_1 x_2} x_3$.
In particular, $x_0 \bar{x}_2 x_3$, so $x_0 \bar{x}_2 \cdots \bar{x}_{n-1} x_n$, and by the induction hypothesis, $x_0 \overline{x_2 \cdots x_{n-1}} x_n$.
Since $x_0 \bar{x}_1 x_2$, one has $x_0 \overline{x_1 \cdots x_{n-1}} x_n$.

The second claim is a direct consequence of the first.
For the third claim, let $x_0 \bar{x}_1 \bar{x}_2 x_3$.
For $i \in \{ 0, 1, 2 \}$, there is a geodesic $\gamma_i$ from $x_i$ to $x_{i+1}$.
Since $x_0 \bar{x}_1 x_2$, the concatenation $\gamma_{01}$ of $\gamma_0$ and $\gamma_1$ is a local geodesic, so a geodesic, and similarly for the concatenation $\gamma_{12}$, as a consequence of $x_1 \bar{x}_2 x_3$.
Therefore, $\gamma_{012}$ is a geodesic, and $x_0 \overline{x_1 x_2} x_3$.
\end{proof}

We recall the following standard definition of graph theory: a \define{hole} in a graph is a cycle of length at least 4 with no chord (every two non-consecutive vertices of the cycle are non-adjacent; equivalently, if forms an induced, or full, subgraph).
Since cut-freeness is hereditary, induced (i.e., full) subgraphs of cut-free graphs are cut-free.

\begin{proposition}
A complete graph is cut-free.
A hole-free connected graph with no cycle of length at least~5 is cut-free
A cut-free connected graph has no hole.
\end{proposition}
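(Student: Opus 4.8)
The plan is to treat the three assertions separately; the first and third are short, and the middle one carries all the weight. For the complete graph, the cut-free condition holds \emph{vacuously}: any two distinct vertices are at distance~$1$, so for a non-stuttering triple $(x_0,x_1,x_2)$ one has $d(x_0,x_2)\le 1<2=d(x_0,x_1)+d(x_1,x_2)$, i.e.\ it is crooked at~$x_1$. Hence no configuration $x_0\bar x_1\bar x_2 x_3$ exists and there is nothing to verify. For the implication ``cut-free $\Rightarrow$ no hole'', I would invoke Proposition~\ref{prop:glob-straight}. Given a hole $v_0v_1\cdots v_{k-1}$ with $k\ge 4$, the sequence $(v_0,\dots,v_{k-1})$ is a simple chain and it is straight at every interior index: since the cycle is induced, $v_{i-1}$ and $v_{i+1}$ are non-adjacent for $1\le i\le k-2$, so $d(v_{i-1},v_{i+1})=2=d(v_{i-1},v_i)+d(v_i,v_{i+1})$. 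If the graph were cut-free, Proposition~\ref{prop:glob-straight} would force this straight chain to be globally straight, giving $d(v_0,v_{k-1})=k-1\ge 3$; but $v_{k-1}v_0$ is an edge of the cycle, so $d(v_0,v_{k-1})=1$, a contradiction.

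For the middle assertion I would argue by contraposition: assuming a hole-free graph $G$ with no cycle of length $\ge 5$ is \emph{not} cut-free, I would exhibit a hole or a $5$-cycle. By Proposition~\ref{prop:glob-straight}, failure of cut-freeness produces a straight simple chain that is not globally straight. Replacing each of its steps by a geodesic, I would first reduce to a \emph{unit-step} locally geodesic path $W=(w_0,\dots,w_L)$, i.e.\ with $w_{i-1}\sim w_i$ and $w_{i-1}\not\sim w_{i+1}$ for all~$i$, still satisfying $d(w_0,w_L)<L$. The only point needing care here is straightness at the junctions between consecutive geodesic segments: if $p$ precedes and $q$ follows a junction vertex, then $p$ and~$q$ lie on a common geodesic at distance exactly~$2$, so $p\not\sim q$, as required. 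Among all such paths I would then take one of minimal length~$L$; since a unit-step straight path of length $\le 2$ is automatically globally straight, minimality forces $L\ge 3$, and (subpaths of straight chains being straight by Lemma~\ref{lem:metric}) every proper subpath is globally straight, so in particular $d(w_0,w_L)\in\{L-2,L-1\}$.

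To finish, I would compare $W$ with a geodesic $\delta$ from $w_0$ to $w_L$ and examine the \emph{bigon} $C$ bounded by the two arcs running from their first point of divergence $P$ to their first subsequent reconvergence $Q$. As the $\delta$-arc is a geodesic its length equals $d(P,Q)$ while the $W$-arc is at least that long, and together with ``no cycle of length $\ge 5$'' this forces $C$ to have length $3$ or $4$; a triangle is impossible, since it would make two skip-neighbors of $W$ adjacent (or the endpoints of a length-$2$ geodesic adjacent). Thus $|C|=4$, with arc lengths $(3,1)$ or $(2,2)$. In the $(3,1)$ case both diagonals are excluded by straightness of the $W$-arc, so $C$ is a chordless $4$-cycle, i.e.\ a hole, a contradiction. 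The remaining $(2,2)$ case is the crux: straightness kills the $W$-diagonal, but the chord joining the two midpoints may be present, yielding a harmless $K_4$ minus an edge and no immediate contradiction. \emph{This chord case is the main obstacle.} I would resolve it by descent: the midpoint chord lets me reroute $W$ through the geodesic's midpoint, producing a locally geodesic path of the same length that agrees with~$\delta$ one step further, and iterating must eventually run into one of the previous contradictions or violate the minimality of~$L$. The delicate bookkeeping is to check that each reroute preserves the unit-step locally geodesic property, since straightness can fail one step downstream --- which is exactly the point at which the absence of holes and of long cycles has to be invoked once more.
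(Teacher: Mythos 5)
Your treatment of the first and third assertions is correct. The first is the same vacuity observation as the paper's (every non-stuttering triple in a complete graph is crooked at its middle vertex), and your proof of the third is in fact cleaner than the paper's: the paper takes a hole of \emph{minimal} length and needs to control graph distances between its vertices, whereas you use only chordlessness to get straightness at every interior index and then let Proposition~\ref{prop:glob-straight} produce the contradiction $d(v_0,v_{k-1})=k-1\geq 3$ against the cycle edge $v_0\sim v_{k-1}$.

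The middle assertion, however, is not proved: the $(2,2)$ case of your bigon analysis --- which you yourself call the crux --- is left as a plan, not an argument. Concretely, write the $W$-arc as $(P,a,Q)$, the $\delta$-arc as $(P,m,Q)$, and let $w^+$ be the vertex following $Q$ on $W$. Your reroute of $W$ through $m$ can fail to be straight at $Q$, namely exactly when $m\sim w^+$; you acknowledge this, but you give neither a mechanism for excluding it nor a termination proof for the iteration, and ``iterating must eventually run into one of the previous contradictions'' is a hope, not an argument. The missing observation that closes the gap is this: if $m\sim w^+$, then $P\sim a\sim Q\sim w^+\sim m\sim P$ is a cycle of length $5$, contradicting the hypothesis (its five vertices are distinct: $P,a,Q,w^+$ lie on the path $W$; $m\neq a$ by the choice of $P$, $m\neq P,Q$ since $\delta$ is a path, and $m\neq w^+$ since $m\sim w^+$; moreover $w^+$ exists, because $Q=w_L$ would force $W$ and $\delta$ to reach $w_L$ in the same number of steps, against $d(w_0,w_L)<L$). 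Hence $m\not\sim w^+$ always holds, the rerouted walk is straight everywhere (at $P$ this is free, since $W$ and $\delta$ share their prefix and $\delta$ is a geodesic), so it is again a counterexample of the same minimal length $L$ whose common prefix with $\delta$ has grown; since that prefix can never reach length $d(w_0,w_L)$ (the vertices of a minimal counterexample are pairwise distinct), the descent terminates in one of your contradictory cases. With this patch your route works, but note how much shorter the paper's is: it never takes a minimal counterexample or a bigon. From a non-cut-free configuration $x_0\bar{x}_1\bar{x}_2 x_3$ with $x_0\check{x}_2 x_3$ it builds a single cycle through the four distinct vertices $x_0,x_1,x_2,x_3$, out of shortest paths between consecutive $x_i$ and a shortest path from $x_0$ to $x_3$; such a cycle automatically has length at least $4$, so the no-long-cycle hypothesis forces length exactly $4$, and straightness excludes both diagonals, yielding a hole directly --- no descent is needed because the four given points already guarantee the cycle is long enough.
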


In particular, a tree is cut-free, as proved in~\cite[Exa.~7.18]{LS}.

\begin{proof}
The first claim is obvious.

For the second claim, let $x_0 \bar{x}_1 \bar{x}_2 x_3$ be a chain.
For $i \in \{ 1, 2, 3 \}$, let $c_i$ be a shortest path from $x_{i-1}$ to $x_i$.
The only common vertex between $c_1$ and $c_2$ is $x_1$, because $c_1$ and $c_2$ are shortest paths and $x_0 \bar{x}_1 x_2$.
Similarly, the only common vertex between $c_2$ and $c_3$ is $x_2$.
If $c_1$ and $c_3$ have a common vertex $u$, then ($x_0 \bar{u} x_1$ and $x_0 \bar{x}_1 x_2$) implies $u \bar{x}_1 x_2$, and 
 ($x_2 \bar{u} x_3$ and $x_1 \bar{x}_2 x_3$) implies $x_1 \bar{x}_2 u$.
But one cannot have both $x_1 \bar{x}_2 u$ and $u \bar{x}_1 x_2$.
Therefore, $c_1$ and $c_3$ have no vertex in common.
Therefore, the concatenation $c_1 c_2 c_3$ is a shortest path, so $x_0 \overline{x_1 x_2} x_3$.

Let $x'_0$ be the last common vertex of $c_1$ and $c$.
It cannot be $x_1$ since $x_0 \check{x}_1 x_3$.
Let $x'_3$ be the first common vertex of $c_3$ and $c$.
It cannot be $x_2$ since $x_0 \check{x}_2 x_3$.
Then,  $x'_0 \bar{x}_1 \bar{x}_2 x'_3$ and $x'_0 \check{x}_1 x'_3$ and the corresponding restrictions of the $c_i$'s and $c$ are as above.
Therefore, we can suppose that the only common vertex of $c$ and $c_1$ (resp.\ $c_2$) is $x_1$ (resp.\ $x_2$).

Therefore, the concatenation $c_1 c_2 c_3 c$ (with $c$ reversed) is a cycle, and since it contains the distinct vertices $x_0, x_1, x_2, x_3$, it has length at least~4.
If it has length exactly~4, then it is a 4-hole since $(x_0, x_2)$ and $(x_1, x_3)$ are non-adjacent.

For the third claim, let $(x_0, \ldots, x_{n-1})$, with $n \geq 4$, be a hole of minimal length.
Then, $x_0 \bar{x}_1 \bar{x}_2 x_{\lfloor n/2 \rfloor + 1}$, but $x_0 \check{x}_1 x_{\lfloor n/2 \rfloor + 1}$.
\end{proof}

\begin{remark}
The converses do not hold: the tree with three vertices is cut-free but not complete; the complete graph on five vertices has a five-cycle and is cut-free; the cyclic graph with five vertices where two edges are added so that one of its vertices is adjacent to all others, has no hole but is not cut-free.
\end{remark}

\begin{proposition}\label{prop:prop***}
A Menger-convex cut-free space has property~$(*{*}*)$.
\end{proposition}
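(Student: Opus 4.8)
The plan is to build the interpolating point directly from Menger-convexity and then to use cut-freeness, in the strengthened form of Proposition~\ref{prop:glob-straight}, to check that crookedness is not destroyed at the two original interior vertices. Concretely, I would fix a crooked simple $3$-chain $x_0 \check{x}_1 \check{x}_2 x_3$, so that in particular $x_0 \check{x}_1 x_2$ and $x_1 \check{x}_2 x_3$, and produce $z$ strictly between $x_1$ and $x_2$.

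First I would invoke Menger-convexity. Since the chain is non-stuttering, $x_1 \neq x_2$, so there exists $z$ with $x_1 \bar{z} x_2$. As $z$ is strictly between $x_1$ and $x_2$, one has $z \neq x_1$ and $z \neq x_2$, hence the $4$-sequence $(x_0, x_1, z, x_2, x_3)$ is non-stuttering, and the straightness $\bar{z}$ at the new vertex holds by construction. It then remains to verify that this $4$-chain is crooked at $x_1$ and at $x_2$, that is, $x_0 \check{x}_1 z$ and $z \check{x}_2 x_3$.

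For crookedness at $x_1$, I would argue by contradiction: if instead $x_0 \bar{x}_1 z$, then the simple $3$-chain $(x_0, x_1, z, x_2)$ is straight (straight at $x_1$ by this assumption, and straight at $z$ because $x_1 \bar{z} x_2$). By Proposition~\ref{prop:glob-straight} it is globally straight, hence so is its subsequence $(x_0, x_1, x_2)$ by Lemma~\ref{lem:metric}; but globally straight and non-stuttering means $x_0 \bar{x}_1 x_2$, contradicting $x_0 \check{x}_1 x_2$. Crookedness at $x_2$ follows from the mirror-image argument applied to $(x_1, z, x_2, x_3)$: if $z \bar{x}_2 x_3$, this chain is straight, hence globally straight, hence its subsequence $(x_1, x_2, x_3)$ is globally straight, giving $x_1 \bar{x}_2 x_3$ and contradicting $x_1 \check{x}_2 x_3$.

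I do not expect a genuine obstacle here, since the point $z$ is handed to us directly by Menger-convexity; the only steps needing care are bookkeeping ones. One must confirm that the auxiliary chains $(x_0, x_1, z, x_2)$ and $(x_1, z, x_2, x_3)$ are non-stuttering, so that Proposition~\ref{prop:glob-straight} and Lemma~\ref{lem:metric} apply, and that the conclusion drawn from global straightness is exactly the negation of the relevant crookedness hypothesis. I would note that appealing to \emph{global} straightness is essential: the bare defining property of cut-freeness applied to $x_0 \bar{x}_1 \bar{z} x_2$ only yields $x_0 \bar{z} x_2$, whereas passing through Proposition~\ref{prop:glob-straight} (or, equivalently, a one-line distance computation using $x_1 \bar{z} x_2$) delivers the needed $x_0 \bar{x}_1 x_2$.
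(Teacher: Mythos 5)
Your proof is correct and takes essentially the same route as the paper's: produce $z$ with $x_1 \bar{z} x_2$ by Menger-convexity, then use cut-freeness (contrapositively) to check that crookedness at $x_1$ and at $x_2$ survives the insertion of $z$. The only cosmetic difference is that you invoke Proposition~\ref{prop:glob-straight} and the subsequence statement of Lemma~\ref{lem:metric}, where the paper applies the defining property of cut-freeness directly together with the short distance computation you yourself point out at the end.
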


\begin{proof}
Let $x_0 \check{x}_1 \check{x}_2 x_3$ be a crooked 3-chain.
Since $X$ is Menger-convex, there exists $z \in X$ such that $x_1 \bar{z} x_2$.
Since $x_1 \bar{z} x_2$ and $X$ is cut-free, then $x_0 \check{x}_1 x_2$ implies $x_0 \check{x}_1 z$, and $x_1 \check{x}_2 x_3$ implies $z \check{x}_2 x_3$.
Putting together these three properties, one obtains $x_0 \check{x}_1 \bar{z} \check{x}_2 x_3$.
\end{proof}

\subsection{Decomposition of the magnitude complex}

Let $n \in \NN$.
For any $k \in \intervZ{0}{n}$ and $u \in X^{k+1}$, set
\begin{equation}
C_n(X; u) \coloneqq
\ZZ \: \set{x_{i_0} \overline{\cdots} \check{x}_{i_1} \overline{\cdots} \: \check{\cdot}  \: \ldots \: \check{\cdot} \: \overline{\cdots} \check{x}_{i_{k-1}} \overline{\cdots} x_{i_k} \in X^{\underline{n+1}}}{\forall m \in \intervZ{0}{k} \; x_{i_m} = u_m}
\end{equation}
where it is implied that $0 = i_0 < \cdots < i_k = n$.
In other words, $C_n(X; u)$ is the subgroup of $C_n(X)$ generated by simple $n$-chains which are crooked at some indices $0 = i_0 < \cdots < i_k = n$ and straight at all other indices, and with $x_{i_m} = u_m$ for $m \in \intervZ{0}{k}$ (the sequence $u$ may be stuttering).
If $x \in C_n(X; u)$ is simple, then $0 = i_0 < \cdots < i_k = n$ is the sequence of indices where $x$ is crooked.
If $k > n$, we set $C_n(X; u) \coloneqq 0$.

If $k = 0 < n$, then $C_n(X; u) = 0$.
If $k = 1$ (resp.\ $n$), then the elements of $C_n(X; u)$ are straight (resp.\ crooked).
If $k = n$ and $u$ is non-stuttering, then $C_n(X; u) = \ZZ \: u$.

Since each simple $n$-chain is in a unique $C_n(X; u)$, one has
\begin{equation}\label{eq:decomp}
C_n(X) = \bigoplus_{\substack{u \in X^+}} C_n(X; u).
\end{equation}
Define $Z_n(X; u) \coloneqq Z_n(X) \cap C_n(X; u)$ and $B_n(X; u) \coloneqq B_n(X) \cap C_n(X; u)$.

\begin{proposition}\label{prop:decomp}
Let $X$ be a cut-free metric space.
If $u \in X^+$, then
\begin{equation}
d \big( C_n(X; u) \big) \subseteq C_{n-1}(X; u).
\end{equation}
In particular,
\begin{equation}
Z_n(X) = \bigoplus_{u \in X^+} Z_n(X; u)
\end{equation}
and
\begin{equation}
B_n(X) = \bigoplus_{u \in X^+} B_n(X; u).
\end{equation}
If $C_n(X; u) \neq 0$, then $u$ is a crooked simple $k$-chain.
\end{proposition}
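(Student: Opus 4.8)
The plan is to reduce everything to a single geometric fact: in a cut-free space, deleting a vertex at which a simple chain is \emph{straight} alters neither the straightness nor the crookedness at any other index. Granting this, both the inclusion and the decomposition formulas follow by bookkeeping.

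First I would prove the inclusion $d\big(C_n(X;u)\big)\subseteq C_{n-1}(X;u)$ on generators. Let $x$ be a simple generator of $C_n(X;u)$, so its crooked indices are exactly $0=i_0<\cdots<i_k=n$ with $x_{i_m}=u_m$, and it is straight elsewhere. By the definition of the face maps, $d_n^i x=0$ unless $x$ is straight at $i$, so every nonzero term of $dx$ deletes a vertex $x_i$ with $i\notin\{i_0,\dots,i_k\}$; in particular all vertices carrying the values $u_0,\dots,u_k$ survive. It then remains to check that $x_0\cdots\hat{x}_i\cdots x_n$ is crooked exactly at the reindexed set $\{i_0,\dots,i_k\}$ and straight at every other index. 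For an index $j$ with $\lvert j-i\rvert\ge 2$ the triple of consecutive vertices at $j$ is untouched by the deletion, so its status is unchanged. For the neighbours $j=i\pm1$: if $x$ was crooked at $j$, then the third and fourth implications of Lemma~\ref{lem:metric}, applied to the quadruple around $i$ and using that $x$ is straight at $i$, show the deleted chain is still crooked at $j$; if instead $x$ was straight at $j$, then the quadruple around $i$ is straight at both $i$ and $j$, hence globally straight by Proposition~\ref{prop:glob-straight}, and so is its $3$-vertex subsequence obtained by deleting $x_i$ (Lemma~\ref{lem:metric}), which is precisely straightness at $j$ after the deletion. Hence $d_n^i x\in C_{n-1}(X;u)$, and summing over $i$ gives the inclusion.

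With the inclusion in hand the two displayed formulas are formal. It says exactly that each $C_\bullet(X;u)$ is a subcomplex, so by~\eqref{eq:decomp} the magnitude complex is the direct sum of the subcomplexes $C_\bullet(X;u)$, $u\in X^+$. Writing $x=\sum_u x_u$ with $x_u\in C_n(X;u)$, directness of $\bigoplus_u C_{n-1}(X;u)$ gives $dx=\sum_u dx_u$ with $dx_u\in C_{n-1}(X;u)$, so $dx=0$ iff every $dx_u=0$; this identifies $Z_n(X;u)=Z_n(X)\cap C_n(X;u)$ with $\ker\big(d|_{C_n(X;u)}\big)$ and yields $Z_n(X)=\bigoplus_u Z_n(X;u)$. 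Applying the same directness in degree $n$, any boundary lying in $C_n(X;u)$ is the boundary of the $u$-component of a preimage, so $B_n(X;u)=d\big(C_{n+1}(X;u)\big)$ and $B_n(X)=\bigoplus_u B_n(X;u)$. For the final assertion, suppose $C_n(X;u)\neq 0$ and pick a simple generator $x$ as above. Between consecutive crooked indices $i_{m-1}<i_m$ the chain $x$ is straight, hence globally straight by Proposition~\ref{prop:glob-straight}; as this subsegment has at least one edge and is non-stuttering, $d(u_{m-1},u_m)>0$, so $u$ is non-stuttering. And starting from the crookedness of $x$ at $i_m$ and deleting, one at a time, the straight vertices strictly between $i_{m-1}$ and $i_{m+1}$, the crookedness-preservation half of the argument above (Lemma~\ref{lem:metric}) keeps $x_{i_m}$ crooked at each step, leaving $u_{m-1}\check{u}_m u_{m+1}$; thus $u$ is a crooked simple $k$-chain.

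The one substantive step is the straightness-preservation in the first paragraph: that deleting a straight vertex keeps straight neighbours straight. This is exactly the content of cut-freeness (it fails in general, for instance on the circle), and it is the only place the hypothesis enters; crookedness-preservation is the automatic Lemma~\ref{lem:metric}, and everything else is direct-sum formalism.
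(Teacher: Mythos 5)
Your proof is correct and takes essentially the same approach as the paper's: the same key observation (in a cut-free space, deleting a vertex where a simple chain is straight preserves straightness by cut-freeness and crookedness by Lemma~\ref{lem:metric}), followed by the same direct-sum formalism for $Z_n$ and $B_n$ and the same deletion argument for the final claim that $u$ is a crooked simple $k$-chain. The only cosmetic difference is that you route the straightness-preservation step through Proposition~\ref{prop:glob-straight} (global straightness of straight chains), where the paper invokes the definition of cut-freeness directly.
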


\begin{proof}
Let $x \in C_n(X; u)$ be simple.
Let $i, j \in \intervZ{1}{n-1}$.
If $x$ is crooked at~$j$, then $d_n^j x = 0$.
Suppose that $x$ is straight at~$j$.
Set $k \coloneqq i$ if $i < j$ and $i-1$ if $j < i$.
If $x$ is crooked (resp.\ straight) at $i$, then $d_n^j x$ is crooked (resp.\ straight) at $k$ by Lemma~\ref{lem:metric} (resp.\ by cut-freeness), and $(d_n^j x)_k = x_i$.
Therefore, each $d_n^j x$, hence also $dx$, is in $C_{n-1}(X; u)$.

Let $a \in Z_n(X)$.
By~\eqref{eq:decomp}, we can write $a = \sum_{u \in X^{+}} a_u$ with $a_u \in C_n(X; u)$.
Then, $da = \sum_{u \in X^+} da_u = 0$.
By the first claim, one has $da_u \in C_{n-1}(X; u)$.
Since these groups are in direct sum, this implies $da_u = 0$ for all $u \in X^+$.
Therefore, $a \in \bigoplus_{u \in X^+} Z_n(X; u)$.

The subgroup $B_n(X)$ is generated by the boundaries $dx$ with $x$ a simple $(n+1)$-chain.
Any such $x$ is in some $C_{n+1}(X; u)$, so $dx \in B_n(X; u)$ by the first claim.

Finally, if $C_n(X; u) \neq 0$, then by cut-freeness, $u$ is non-stuttering.
Therefore, by Lemma~\ref{lem:metric}, it is crooked.
\end{proof}

\section{Geodetic spaces and orderings on segments}
\label{sec:geodetic}

In this section, we first recall the important notion of geodeticy introduced in~\cite{LS}.
We then introduce a partial order on the set of points between two given points, which is a total order in geodetic spaces.

\begin{definition}\label{def:geodetic}
A metric space $X$ is \define{geodetic} if $x_0 \bar{x}_1 x_2$ and $x_0 \bar{x}'_1 x_2$ and $x_1 \neq x'_1$ implies $x_0 \bar{x}_1 x'_1$ or $x_0 \bar{x}'_1 x_1$.\end{definition}

\begin{remark}
The conclusion in the definition of geodeticy implies  $x_0 \overline{x_1 x'_1} x_2$ or $x_0 \overline{x'_1 x_1} x_2$.
\end{remark}

An example of a geodetic non-cut-free graph is the cyclic graph of order 5.
An example of a non-geodetic cut-free graph is the complete graph of order 4 with one edge removed.
As noted in~\cite{LS}, geodeticy and cut-freeness are hereditary properties (they are inherited by subspaces).

The following proposition shows that geodeticy is a ``discrete analog'' of unique geodesicy.

\begin{proposition}\label{prop:geodetic}
In a geodetic length space, there exists at most one geodesic connecting any two points.
In particular, a geodesic space is geodetic if and only if it is uniquely geodesic.
\end{proposition}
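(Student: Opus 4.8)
The plan is to prove the first claim directly from the definition of geodeticy, using only the isometry property of geodesics, and then to deduce the ``in particular'' statement from it together with the structure of geodesic spaces.

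For the first claim, suppose $\gamma$ and $\gamma'$ are two geodesics connecting $x_0$ and $x_2$. After reparametrization I may assume both are isometries defined on $\intervC{0}{L}$, where $L = d(x_0, x_2)$, with $\gamma(0) = \gamma'(0) = x_0$ and $\gamma(L) = \gamma'(L) = x_2$. The key observation is that for every $t \in \intervC{0}{L}$, the point $\gamma(t)$ is between $x_0$ and $x_2$: since $\gamma$ is an isometry, $d(x_0, \gamma(t)) = t$ and $d(\gamma(t), x_2) = L - t$, and these sum to $L = d(x_0, x_2)$; the same holds for $\gamma'(t)$. Now fix $t$ and suppose $\gamma(t) \neq \gamma'(t)$. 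Applying geodeticy to these two distinct between-points yields that one of them is between $x_0$ and the other, say $d(x_0, \gamma'(t)) = d(x_0, \gamma(t)) + d(\gamma(t), \gamma'(t))$. But both outer distances equal $t$, forcing $d(\gamma(t), \gamma'(t)) = 0$, a contradiction; the other case is symmetric. Hence $\gamma(t) = \gamma'(t)$ for all $t$, so $\gamma = \gamma'$.

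For the ``in particular'' statement, recall that a geodesic space is a length space, so the first claim applies: a geodetic geodesic space has at most one geodesic between any two points, and being geodesic it has at least one, hence it is uniquely geodesic. Conversely, let $X$ be a uniquely geodesic geodesic space and suppose $x_0 \bar{x}_1 x_2$ and $x_0 \bar{x}'_1 x_2$ with $x_1 \neq x'_1$. The point is that each between-point lies on a genuine geodesic from $x_0$ to $x_2$: concatenating a geodesic from $x_0$ to $x_1$ with one from $x_1$ to $x_2$ produces a path of length $d(x_0, x_1) + d(x_1, x_2) = d(x_0, x_2)$, which the triangle inequality shows to be an isometry, exactly as in the concatenation argument of Proposition~\ref{prop:glob-straight}. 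By unique geodesicy these two geodesics, one through $x_1$ and one through $x'_1$, coincide with a single geodesic $\gamma \colon \intervC{0}{L} \to X$, and $x_1 = \gamma(s)$, $x'_1 = \gamma(s')$ with $s = d(x_0, x_1)$ and $s' = d(x_0, x'_1)$. Since $x_1 \neq x'_1$ we have $s \neq s'$; assuming $s < s'$ gives $x_0 \bar{x}_1 x'_1$ (and $s' < s$ gives $x_0 \bar{x}'_1 x_1$), which is the conclusion of geodeticy.

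The main obstacle is the converse direction of the second claim: one must upgrade the ``discrete'' betweenness hypotheses $x_0 \bar{x}_1 x_2$ into the statement that $x_1$ lies on an honest geodesic, which is precisely where the geodesic (hence length) space structure enters through the concatenation argument. The first claim, by contrast, requires nothing beyond the isometry property of geodesics and the definition of geodeticy.
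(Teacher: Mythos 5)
Your proof of the first claim is correct and essentially the same as the paper's: both show that two geodesics from $x_0$ to $x_2$ pass, at each parameter $t$, through points strictly between $x_0$ and $x_2$ at equal distance $t$ from $x_0$, so geodeticy forces them to coincide. You go further than the paper, whose written proof covers only this first claim: you also spell out the converse of the ``in particular'' statement (uniquely geodesic $\Rightarrow$ geodetic), using the standard concatenation argument to place each between-point on an actual geodesic and then reading off the ordering along the unique geodesic; this is exactly the step the paper leaves implicit, and your treatment of it is correct.
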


\begin{proof}
Let $X$ be a geodetic length space.
If $c_1$ and $c_2$ are two geodesics connecting $x$ to $y$, then for all $t \in \intervO{0}{d(x,y)}$, one has $x \overline{c_1(t)}y$ and $x \overline{c_2(t)}y$ and $d(x, c_1(t)) = t = d(x, c_2(t))$, so by geodeticy, $c_1(t) = c_2(t)$.
\end{proof}

We now study the case of normed (real) vector spaces.

\begin{proposition}
A normed real vector space is geodesic.
It is geodetic (equivalently, uniquely geodesic) if and only if it is strictly convex.
It is cut-free if and only if its unit sphere contains no segments $[x, y]$ and $[y, z]$ such that the segment $[x, z]$ is not included in the unit sphere.
In particular, a geodetic normed vector space is cut-free.
\end{proposition}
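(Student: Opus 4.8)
The plan is to treat the four assertions in turn, with the third as the substantial one. For the first assertion I would exhibit the obvious geodesic: for $x \neq y$ the map $t \mapsto x + t\,(y-x)/\norm{y-x}$ from $[0, \norm{y-x}]$ to $X$ is an isometry onto the segment $[x,y]$, so $X$ is geodesic. For the equivalence ``geodetic $\iff$ strictly convex'', I would first record the normed-space form of betweenness: $x_0 \bar{x}_1 x_2$ holds iff $\norm{x_1 - x_0} + \norm{x_2 - x_1} = \norm{x_2 - x_0}$, i.e. writing $u = x_1 - x_0$ and $v = x_2 - x_1$, iff $\norm{u + v} = \norm{u} + \norm{v}$. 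In a strictly convex space this equality (for $u, v \neq 0$) forces $v$ to be a positive multiple of $u$, hence $x_1$ to lie on the segment $[x_0, x_2]$; geodeticy then follows by comparing the two parameters of $x_1, x'_1$ along $[x_0, x_2]$. Conversely, if the sphere contains a nondegenerate segment $[a, b]$, then with $x_0 = 0$ and $x_2 = a + b$ one checks $0\,\bar{a}\,x_2$ and $0\,\bar{b}\,x_2$ while neither $0\,\bar{a}\,b$ nor $0\,\bar{b}\,a$ holds, so $X$ is not geodetic. The parenthetical ``uniquely geodesic'' is then Proposition~\ref{prop:geodetic}.

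The engine for the third assertion is the following reformulation, which I would prove first. For nonzero $u, v$, with $\hat{u} = u/\norm{u}$ and $\hat{v} = v/\norm{v}$, the equality $\norm{u+v} = \norm{u} + \norm{v}$ holds iff the whole segment $[\hat{u}, \hat{v}]$ lies in the unit sphere; moreover in that case $\widehat{u+v} = (\norm{u}\,\hat{u} + \norm{v}\,\hat{v})/(\norm{u} + \norm{v})$ is a point of $[\hat{u}, \hat{v}]$. The direction ``$\Leftarrow$'' is immediate, and for ``$\Rightarrow$'' I would use that $\lambda \mapsto \norm{\lambda \hat{u} + (1-\lambda)\hat{v}}$ is convex on $[0,1]$, equals $1$ at both endpoints, and equals $1$ at the interior point $\lambda = \norm{u}/(\norm{u}+\norm{v})$; since a convex function attaining its maximum at an interior point is constant, the segment lies in the sphere.

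With this in hand I would prove cut-freeness $\iff$ the stated sphere condition (there are no $x, y, z$ on the sphere with $[x,y]$ and $[y,z]$ in the sphere but $[x,z]$ not). Given a chain $x_0 \bar{x}_1 \bar{x}_2 x_3$, set $a = \widehat{x_1 - x_0}$, $b = \widehat{x_2 - x_1}$, $c = \widehat{x_3 - x_2}$; by the lemma $[a,b]$ and $[b,c]$ lie in the sphere, and $p \coloneqq \widehat{x_2 - x_0}$ lies on $[a,b]$. The key observation is that $[p, b] \subseteq [a,b]$ and $[b,c]$ share the vertex $b$, so the sphere condition yields $[p, c]$ in the sphere, which by the lemma is exactly $x_0 \bar{x}_2 x_3$; hence the sphere condition implies cut-freeness. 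For the converse I would fix $a, b, c$ on the sphere with $[a,b], [b,c]$ in the sphere and, for each interior $p = (s a + t b)/(s+t) \in (a,b)$, build the chain $x_0 = 0$, $x_1 = s a$, $x_2 = s a + t b$, $x_3 = s a + t b + r c$; the lemma shows this is straight at $1$ and $2$, so cut-freeness forces $[p, c]$ in the sphere for every such $p$, and letting $p \to a$ (the sphere being closed) gives $[a, c]$ in the sphere, i.e. the sphere condition. Finally, a geodetic space is strictly convex, so its sphere contains no nondegenerate segment and the condition holds vacuously, giving cut-freeness.

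The main obstacle I anticipate is the bookkeeping in the third assertion: correctly recognizing that the relevant application of the sphere condition uses the shared vertex $b$ rather than the endpoint $a$ (the naive triple $(a, b, c)$ only delivers $[a,c]$, not the needed $[p,c]$), and justifying the limit $p \to a$ together with the degenerate cases $a = b$ or $b = c$, where the collapsing segments reduce to points and the statements hold trivially. The convexity lemma itself is routine once phrased via the maximum principle for convex functions.
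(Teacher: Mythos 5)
Your proof is correct, and its core ingredients are the same as the paper's: the lemma identifying $\norm{u+v} = \norm{u} + \norm{v}$ with the inclusion of $[\hat{u}, \hat{v}]$ in the unit sphere (the paper uses this fact implicitly, in the phrases ``implies that $[x_0, y_0]$ is included in the unit sphere'' and ``for a similar reason''), and chains of the shape $(0, sa, sa+tb, sa+tb+rc)$. The differences are in the logical packaging of the third assertion, and they do change the texture of the argument. The paper proves both directions in contrapositive form. For ``sphere condition fails $\Rightarrow$ not cut-free'' it uses the single chain $(0, x, x+y, x+y+z)$ (your $s=t=r=1$) together with the estimate $\norm{x+y+z} \leq \norm{x+z} + \norm{y} < 3$; this bypasses your quantification over all interior points $p$ and the limit $p \to a$, so closedness of the sphere is never needed. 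For ``not cut-free $\Rightarrow$ sphere condition fails'' its witness triple is $(x_0, y_0, z')$ with $z' = (y+z)/(\norm{y}+\norm{z}) \in [y_0, z_0]$, which is the mirror image, under reversal of the chain, of your triple $(p, b, c)$ with $p \in [a, b]$: your ``key observation'' about applying the condition at the shared vertex is exactly the paper's move, performed at the other end of the chain. Finally, for ``geodetic $\iff$ strictly convex'' the paper routes through unique geodesics (Proposition~\ref{prop:geodetic} plus the fact that in a normed space the local geodesics are exactly the straight lines if and only if the norm is strictly convex), and it deduces ``geodetic $\Rightarrow$ cut-free'' from that same fact, whereas you argue directly on betweenness and verify the sphere condition vacuously; both are sound, the paper's version being terser and yours more self-contained, since it makes the convexity lemma and the limiting argument explicit.
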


\begin{proof}
In a normed vector space, straight lines are local geodesics.
That they are the only ones is easily seen to be equivalent to the strict convexity of the unit ball.
Since straight lines are geodesics, geodetic vector spaces are cut-free.

If there are segments as in the proposition, then $(0, \bar{x}, \overline{x + y}, x + y + z)$ but $\norm{x + y + z} \leq \norm{x + z} +\norm{y} < 3$, so the space is not cut-free.
Conversely, suppose there is a cut.
Up to translation, we can suppose that it is of the form $(0, \bar{x}, \overline{x + y}, x + y + z)$ with nonzero $x, y, z$.
Write $x = \norm{x} x_0$ and $y = \norm{y} y_0$ and $z = \norm{z} z_0$.
Then $(0, \bar{x}, \overline{x + y})$  implies that $[x_0, y_0]$ is included in the unit sphere, which for a similar reason also contains $[y_0, z_0]$.
Set $z' \coloneqq \frac{y + z}{\norm{y} + \norm{z}}$.
Since $(0, \check{x}, \overline{x + y + z})$, the segment $[x_0, z']$ is not included in the unit sphere.
Therefore, the triple $(x_0, y_0, z')$ satisfies the conditions of the propoposition.
\end{proof}

\begin{remark}
The normed vector space $\RR^2$ whose unit ball is $\{ (x, y) \in \RR^2 \mid x^2 + y^2 \leq 1 \text{ and } \abs{x} \leq 1/2 \}$ is not strictly convex but is cut-free.
This gives an example of a proper geodesic space which is cut-free but not geodetic.

The proposition also shows that $\RR^d$ with the norm $\ell_1$ or the norm $\ell_\infty$ is not cut-free.
\end{remark}

\begin{example}
We give an example of a proper geodesic geodetic space which is not cut-free.
Consider the 2-dimensional torus $T \coloneqq \{ ( (2 + \cos \theta) \cos \phi, (2 + \cos \theta) \sin \phi, \sin \theta) \in \RR^3 \mid \phi, \theta \in \intervCO{0}{2\pi} \}$ as a Riemannian submanifold of $\RR^3$.
Define the local geodesic $\gamma \colon \RR \to T, t \mapsto (3 \cos t, 3 \sin t, 0)$.
Let $t_0 \in \intervO{0}{\pi}$ be such that $\gamma(t_0)$ is the cut-point (and first conjugate point) of $\gamma(0)$ along $\gamma$.
Let $t_1 \in \intervO{t_0}{\pi}$.
There are exactly two minimizing geodesics from $\gamma(0)$ to $\gamma(t_1)$.
Let $X$ be the simply connected closed subset of $T$ bounded by $\gamma$ and one of these two geodesics.
Then, $X$ is proper geodesic geodetic but is not cut-free.
\end{example}

\begin{proposition}\label{prop:order}
Let $X$ be a metric space.
Let $x_0, x_1 \in X$.
The relation $\preceq_{x_0, x_1}$ on the set of points between $x_0$ and $x_1$ defined by
\begin{equation}
z \preceq_{x_0, x_1} z' \colonEquiv (x_0, \bar{z}, z')
\end{equation}
is a partial order with least (resp.\ greatest) element $x_0$ (resp.\ $x_1$).
If $X$ is geodetic, then this is a total order.
\end{proposition}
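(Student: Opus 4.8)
The plan is to check the three partial-order axioms directly from the defining identity, then exhibit the extremal elements, and finally extract totality from geodeticy. The one point requiring care throughout is the distinction between the non-strict betweenness $(x_0, \bar{z}, z')$ used in the definition of $\preceq_{x_0, x_1}$ and the strict (non-stuttering) betweenness $x_0 \bar{z} z'$ occurring in Lemma~\ref{lem:metric} and Definition~\ref{def:geodetic}; I would treat any coincidence among the relevant points as a separate, trivial case so that those strict-betweenness statements can be invoked legitimately.

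For the partial-order axioms, reflexivity is immediate since $(x_0, \bar{z}, z)$ unwinds to $d(x_0, z) = d(x_0, z) + d(z, z)$. Antisymmetry follows by adding the two straightness identities coming from $z \preceq_{x_0, x_1} z'$ and $z' \preceq_{x_0, x_1} z$, which yields $2\,d(z, z') = 0$ and hence $z = z'$. The crux is transitivity: from $(x_0, \bar{z}, z')$ and $(x_0, \bar{z'}, z'')$ I substitute to get $d(x_0, z'') = d(x_0, z) + d(z, z') + d(z', z'')$, and then the triangle inequalities $d(x_0, z'') \le d(x_0, z) + d(z, z'')$ and $d(z, z'') \le d(z, z') + d(z', z'')$ combine to force $d(x_0, z'') = d(x_0, z) + d(z, z'')$, that is, $z \preceq_{x_0, x_1} z''$. (Equivalently, after discarding coincidences this is the first assertion of Lemma~\ref{lem:metric} applied to $(x_0, z, z', z'')$, followed by passage to the globally straight subsequence $(x_0, z, z'')$.)

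The extremal elements are then read off directly: both $x_0$ and $x_1$ lie in the set since $(x_0, \bar{x}_0, x_1)$ and $(x_0, \bar{x}_1, x_1)$ are trivially satisfied; for every $z$ in the set one has $x_0 \preceq_{x_0, x_1} z$ because $(x_0, \bar{x}_0, z)$ is automatic, and $z \preceq_{x_0, x_1} x_1$ is nothing but the membership condition $(x_0, \bar{z}, x_1)$. For totality when $X$ is geodetic, I would first dispose of the degenerate cases: if $z = z'$, or if one of $z, z'$ equals $x_0$ or $x_1$, comparability is given by reflexivity or by the extremal-element property just established. Otherwise $z$ and $z'$ are distinct and both strictly between $x_0$ and $x_1$, so $x_0 \bar{z} x_1$ and $x_0 \bar{z'} x_1$ hold strictly; applying Definition~\ref{def:geodetic} with the common endpoints $x_0$ and $x_1$ produces $x_0 \bar{z} z'$ or $x_0 \bar{z'} z$, i.e.\ $z \preceq_{x_0, x_1} z'$ or $z' \preceq_{x_0, x_1} z$.

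I do not expect a genuine obstacle here; the only real work is the bookkeeping that lets the strict-betweenness hypotheses of Lemma~\ref{lem:metric} and of geodeticy be applied, the transitivity computation being the substantive step of the partial-order part.
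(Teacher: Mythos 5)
Your proof is correct and takes essentially the same route as the paper's: reflexivity by inspection, transitivity and antisymmetry via the metric identities underlying Lemma~\ref{lem:metric}, the extremal elements read off from the definition, and totality as a direct application of Definition~\ref{def:geodetic}. Your explicit handling of the strict versus non-strict betweenness distinction (and of the degenerate coincidence cases) is a detail the paper's one-line proof leaves implicit, but it does not constitute a different argument.
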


\begin{proof}
The relation $\preceq_{x_0, x_1}$ is clearly reflexive.
It is transitive and antisymmetric by Lemma~\ref{lem:metric}.
Totality is an immediate consequence of the geodeticy of~$X$.
\end{proof}

\section{Acyclicity of Menger-convex geodetic cut-free spaces}
\label{sec:acyclic}

Let $n \in \NN$.
Let $a$ be an $n$-chain.
It is a finite sum $a = \sum_{x \in X^{\underline{n+1}}} a_x \: x$ with $a_x \in \ZZ$ for $x \in X^{\underline{n+1}}$ and $(a_x)$ an almost zero family.
We will also write $a(x)$ in place of $a_x$ for the sake of readability.

One has
\begin{align*}
da
&= \sum_{x \in X^{\underline{n+1}}} a_x \sum_{i=1}^{n-1} (-1)^i  x_0 \cdots \hat{\bar{x}}_i \cdots x_n \\
&= \sum_{y \in X^{\underline{n}}} \sum_{i=1}^{n-1} (-1)^i \sum_{z \in X} a(y_0 \cdots y_{i-1} \bar{z} y_i \cdots y_{n-1}) \: y.
\end{align*}
Therefore, $da = 0$ if and only if for all $y \in X^{\underline{n}}$ one has
\begin{equation}\label{eq:cycle}
\sum_{i=1}^{n-1} (-1)^i \sum_{z \in X} a(y_0 \cdots y_{i-1} \bar{z} y_i \cdots y_{n-1}) = 0.
\end{equation}
In other words,
\begin{multline}\label{eq:Z_n}
Z_n(X) =\\
\set[\Big]{\sum_{x \in X^{\underline{n+1}}} a_x \: x \in C_n(X)}{\forall y \in X^{\underline{n}} \; \sum_{i=1}^{n-1} (-1)^i \sum_{z \in X} a(y_0 \cdots y_{i-1} \bar{z} y_i \cdots y_{n-1}) = 0}.
\end{multline}

\begin{lemma}\label{lem:main}
Let $X$ be Menger-convex geodetic cut-free.
If $n \geq 1$ and $u \in X^+$, then $Z_n(X; u) = B_n(X; u)$.
\end{lemma}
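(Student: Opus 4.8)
The plan is to prove, for each $u \in X^+$ with $C_n(X;u) \neq 0$, that the subcomplex $C_\bullet(X;u)$ is acyclic in positive degrees; by Proposition~\ref{prop:decomp} this yields the stated equality summand by summand. Since $\partial^2 = 0$ gives $B_n(X;u) \subseteq Z_n(X;u)$ automatically, the content is the reverse inclusion: every cycle in $C_n(X;u)$ is a boundary. If $u$ is a single point, then $C_n(X;u) = 0$ for $n \geq 1$ and there is nothing to prove, so assume $u = (u_0, \ldots, u_k)$ is a crooked simple $k$-chain with $k \geq 1$. A simple chain in $C_n(X;u)$ is the concatenation of $k$ straight segments, the $m$-th running from $u_{m-1}$ to $u_m$; by cut-freeness (Proposition~\ref{prop:glob-straight}) each such segment is in fact globally straight, and by Proposition~\ref{prop:order} the geodeticy of $X$ makes the set of points between $u_{m-1}$ and $u_m$ totally ordered by $\preceq_{u_{m-1}, u_m}$, so each segment is recorded by a strictly increasing finite sequence in that order. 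Crucially, Menger-convexity guarantees that the set of points strictly between $u_0$ and $u_1$ is nonempty.

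The main step is to build an explicit contracting homotopy on $C_\bullet(X;u)$. Fix once and for all a point $s_0$ strictly between $u_0$ and $u_1$, available by Menger-convexity. For a simple chain $x \in C_n(X;u)$, define $h(x)$ to be the simple chain obtained by inserting $s_0$ into the first segment at the unique position dictated by the order $\preceq_{u_0, u_1}$ (uniqueness being geodeticy), and set $h(x) = 0$ when $s_0$ already occurs among the vertices of $x$; then extend by linearity. I would first check that $h$ is well defined with target $C_{n+1}(X;u)$: that the augmented first segment is again globally straight follows by telescoping the defining betweenness relations $u_0 \bar{z} z'$ of the order, and that the result lies in the same summand $C_{n+1}(X;u)$, with unchanged crooked indices $u_0, \ldots, u_k$, follows from cut-freeness through Proposition~\ref{prop:decomp}, since inserting a vertex where the chain is straight disturbs neither crookedness nor straightness elsewhere.

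It then remains to verify the homotopy identity $\partial h + h \partial = \pm \operatorname{id}$ on $C_n(X;u)$ for $n \geq 1$, with a fixed overall sign. Because the boundary map deletes only vertices at which a chain is straight (Lemma~\ref{lem:crooked} together with Proposition~\ref{prop:decomp}) and $h$ alters only the first segment, the first-segment contributions to $\partial h x$ and $h \partial x$ collapse to the classical ``coning over a vertex'' identity for the full ordered simplex on the nonempty set of interior points of that segment, while the terms deleting vertices in the later segments cancel, since $h$ commutes with such deletions up to the Koszul sign. Granting this identity, any cycle $x \in Z_n(X;u)$ satisfies $\pm x = \partial(h x) \in B_n(X;u)$, giving $Z_n(X;u) \subseteq B_n(X;u)$ and hence the lemma.

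I expect the sign bookkeeping in this last identity to be the main obstacle: one must track the index shift caused by inserting $s_0$, handle separately the degenerate case where $s_0$ already appears (so that $h x = 0$ and the identity reads $h \partial x = \pm x$), and confirm the cancellations at the junction vertex $u_1$ where the first segment meets the remainder of the chain. All of these are routine once the ordered-insertion map has been pinned down by geodeticy, and none of them requires passing to the full tensor-product decomposition of the complex.
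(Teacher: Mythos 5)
Your route---an explicit contracting homotopy on each summand $C_\bullet(X;u)$---is genuinely different from the paper's argument and can be made to work, but one step fails as literally written: the vanishing clause in the definition of $h$. You set $h(x)=0$ whenever $s_0$ occurs among the vertices of $x$; it must instead be: whenever $s_0$ occurs among the vertices of the \emph{first segment} of $x$ (positions $0,\ldots,i_1$). A point strictly between $u_0$ and $u_1$ can perfectly well occur as a vertex of a \emph{later} segment, since vertices of a simple chain may repeat at non-adjacent positions (cf.\ Remark~\ref{rmk:ambig}), and then your $h$ vanishes when it should not. Concretely, take $X=\RR$, $u=(0,1,0)$ (a crooked simple $2$-chain), $s_0=1/2$, and $x=(0,1,1/2,0)\in C_3(X;u)$, which is crooked at~$1$ and straight at~$2$. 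Then $\partial x=(0,1,0)$, and with your definition $h(x)=0$, so $(\partial h+h\partial)(x)=h(\partial x)$ is, up to sign, the chain $(0,1/2,1,0)$, which is not a multiple of $x$: the homotopy identity fails. With the corrected clause one has instead $h(x)=\pm(0,1/2,1,1/2,0)$ (the repeated non-adjacent vertex $1/2$ is allowed), and a direct check restores $(\partial h+h\partial)(x)= \pm x$. Once this is repaired, the rest of your outline is sound: the ordered insertion is well defined and stays in $C_{n+1}(X;u)$ by Lemma~\ref{lem:metric}, cut-freeness, and Proposition~\ref{prop:order}; deletions in later segments anticommute with the insertion (the index shift gives exactly the sign flip needed for cancellation); and on the first segment one gets the usual ordered-cone identity, including the degenerate case where $s_0$ is a first-segment vertex, in which deleting $s_0$ and re-inserting it returns $x$ with total sign $+1$.

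For comparison, the paper proves the inclusion $Z_n(X;u)\subseteq B_n(X;u)$ with no ordered-insertion bookkeeping at all: given a cycle $a$, it takes the $\preceq_{u_0,u_1}$-least vertex $s_0\neq u_0$ occurring in the \emph{finite} support of $a$, uses Menger-convexity to pick $r$ strictly between $u_0$ and $s_0$, and inserts $r$ always at position~$1$; minimality of $s_0$ makes this insertion order-correct for every chain in the support, and the leftover terms of $a+d\tilde a$ are killed all at once by the cycle equation~\eqref{eq:cycle} rather than by pairwise Koszul cancellation. The price is that $r$ depends on $a$, so the paper produces a filling of each given cycle, not a chain map on the complex. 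Your (corrected) construction is stronger: it exhibits an explicit contraction of each summand in positive degrees, independent of any particular cycle, which is close in spirit to the order-complex decomposition of~\cite{KY}; the cost is precisely the sign analysis and degenerate-case handling that you flag, which is where the flaw above sits.
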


\begin{proof}
Let $n \geq 1$ and $u \in X^{\underline{k+1}}$.
If $k= 0$ or $n < k$, then $Z_n(X; u) = 0$.
Therefore, we suppose that $1 \leq k \leq n$.
Let $a \in Z_n(X; u)$.
It is a finite sum $a = \sum_{x \in X^{\underline{n+1}}} a_x \: x$ with $a_x \in \ZZ$ for $x \in X^{\underline{n+1}}$ and $(a_x)$ an almost zero family.
Define the finite set $S \coloneqq \bigcup_{x \in \supp a} \bigcup_{i = 0}^n x_i$.

Let $s_0$ be the $\preceq_{u_0, u_1}$-smallest element of $S$ distinct from $u_0$.
By Menger-convexity of $X$, there exists $r \in X$ such that $u_0 \bar{r} s_0$.
Set
\begin{equation*}
\tilde{a} \coloneqq \sum_x a_x \: x_0 r x_1 \cdots x_n.
\end{equation*}

For all nonzero summands in the above expression, one has $x_0 \bar{r} x_1$ since $(x_0 \bar{x}_1, u_1)$ and $s_0$ is $\preceq_{u_0, u_1}$-minimal (here, we use the fact that $\preceq_{u_0, u_1}$ is a total order, as per Proposition~\ref{prop:order}).
Therefore,
\begin{equation*}
a + d\tilde{a} = \sum_x a_x \sum_{i=1}^{n-1} (-1)^{i+1} \: x_0 r x_1 \cdots \check{\bar{x}}_i \cdots x_n.
\end{equation*}
The coefficient of $y_0 r y_1 \cdots y_{n-1}$ in this sum is $\sum_{i=1}^{n-1} (-1)^{i+1} \sum_z a(y_0 \cdots y_{i-1} \bar{z} y_i \cdots y_{n-1})$, which vanishes by~\eqref{eq:cycle}.
Therefore, $a = - d\tilde{a} \in B_n(X) \cap C_n(X;u) = B_n(X; u)$.
\end{proof}

\begin{theorem}\label{thm:main}
If $X$ is a Menger-convex geodetic cut-free space, then $H_n(X) = 0$ for $n \neq 0$.
\end{theorem}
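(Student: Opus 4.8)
The plan is to assemble the statement directly from the two structural facts already in hand: the decomposition of cycles and boundaries in a cut-free space (Proposition~\ref{prop:decomp}) and the local acyclicity of Lemma~\ref{lem:main}. The case $n < 0$ is vacuous since the chain groups vanish there, so all the content lies in the range $n \geq 1$.

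For $n \geq 1$, I would first invoke Proposition~\ref{prop:decomp}. Because $X$ is cut-free, it yields
\[
Z_n(X) = \bigoplus_{u \in X^+} Z_n(X; u) \quad\text{and}\quad B_n(X) = \bigoplus_{u \in X^+} B_n(X; u),
\]
with each summand living inside the corresponding piece $C_n(X; u)$ of the decomposition~\eqref{eq:decomp}. Since boundaries are cycles, $B_n(X; u) \subseteq Z_n(X; u)$ for every $u$, and because the ambient pieces $C_n(X; u)$ are in direct sum, the quotient $H_n(X) = Z_n(X)/B_n(X)$ splits as $\bigoplus_{u \in X^+} Z_n(X; u)/B_n(X; u)$.

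It then remains to apply Lemma~\ref{lem:main}, which deploys the full set of hypotheses to conclude $Z_n(X; u) = B_n(X; u)$ for every $u \in X^+$ as soon as $n \geq 1$. Each local quotient is therefore trivial, so the entire direct sum vanishes and $H_n(X) = 0$.

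The genuine difficulty is concentrated in Lemma~\ref{lem:main}: its proof takes a cycle $a \in Z_n(X; u)$ and builds an explicit primitive by prepending to every chain in $\supp a$ a single point $r$ lying between $u_0$ and the $\preceq_{u_0, u_1}$-smallest relevant vertex, whose existence comes from Menger-convexity. What makes this simultaneous prepending coherent across the whole support is that $\preceq_{u_0, u_1}$ is a \emph{total} order, which is precisely where geodeticy enters through Proposition~\ref{prop:order}; cut-freeness, in turn, is what guarantees the clean decomposition used above. Once Lemma~\ref{lem:main} is granted, the only formal point needing a moment's attention is that the quotient of the two direct sums really is the direct sum of the quotients, which is immediate from the nesting $B_n(X; u) \subseteq Z_n(X; u)$ inside each summand $C_n(X; u)$.
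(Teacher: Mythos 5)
Your proposal is correct and follows exactly the paper's own route: the paper proves Theorem~\ref{thm:main} precisely by combining Proposition~\ref{prop:decomp} with Lemma~\ref{lem:main}, just as you do. Your elaboration of the formal point (that $Z_n(X) = \bigoplus_u Z_n(X;u) = \bigoplus_u B_n(X;u) = B_n(X)$, so the quotient vanishes) is a correct filling-in of the one-line argument the paper leaves implicit.
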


\begin{proof}
This is a consequence of Lemma~\ref{lem:main} and Proposition~\ref{prop:decomp}.
\end{proof}

\begin{corollary}
If $X$ is a convex subset of a strictly convex normed vector space, then $H_n(X) = 0$ for $n \neq 0$.
\end{corollary}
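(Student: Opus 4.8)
The plan is to deduce the corollary directly from the acyclicity result of Theorem~\ref{thm:main}, by verifying that a convex subset $X$ of a strictly convex normed vector space $V$ satisfies all three of its hypotheses: Menger-convexity, geodeticy, and cut-freeness.

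First I would verify Menger-convexity. Here I would appeal to the remark following Definition~\ref{def:Menger}, which records that every convex subset of a normed vector space is Menger-convex. This is the one place where the convexity of $X$ enters essentially, since---unlike the other two properties---Menger-convexity is not hereditary and so cannot simply be inherited from the ambient space $V$.

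Next I would handle geodeticy and cut-freeness by heredity. Since $V$ is strictly convex, the proposition on normed vector spaces shows that $V$ is geodetic, and that a geodetic normed vector space is cut-free; hence $V$ is both geodetic and cut-free. As noted after Definition~\ref{def:geodetic}, geodeticy and cut-freeness are hereditary properties, so they pass to the subspace $X$. With all three hypotheses in hand, Theorem~\ref{thm:main} immediately yields $H_n(X) = 0$ for $n \neq 0$.

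The only point requiring care is precisely this asymmetry: two of the three properties descend freely to subspaces, but Menger-convexity must be obtained from the convexity of $X$ itself rather than from $V$ (a finite or non-convex subset of $V$ would generally fail to be Menger-convex). Beyond keeping that distinction straight, the argument is a routine assembly of results already established, so I expect no genuine obstacle.
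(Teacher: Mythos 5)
Your proof is correct and is exactly the intended argument: the paper states this corollary immediately after Theorem~\ref{thm:main} without an explicit proof, and the implicit derivation is precisely your assembly of the remark on Menger-convexity of convex subsets, the proposition that a strictly convex normed vector space is geodetic and hence cut-free, and the heredity of geodeticy and cut-freeness. Your observation that Menger-convexity is the one non-hereditary hypothesis, and so must come from the convexity of $X$ itself, is a correct and worthwhile point of care.
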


For the sake of completeness, we recall the following result from~\cite{LS}.

\begin{theorem}[{\cite[Thm.~7.19]{LS}}]\label{thm:LS}
If $X$ is geodetic and has property~$(*_2)$, then $H_2(X) = 0$.
In particular, if~$X$ is uniquely geodesic, then $H_2(X) = 0$.
\end{theorem}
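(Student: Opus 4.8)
The goal is to prove $Z_2(X) = B_2(X)$, and the plan is to split an arbitrary $2$-cycle according to the crooked/straight dichotomy and dispatch the two parts using the two hypotheses separately. First I would write a $2$-cycle $a$ as $a = a^{\mathrm{cr}} + a^{\mathrm{str}}$, where $a^{\mathrm{cr}}$ is the part of $a$ supported on crooked simple $2$-chains and $a^{\mathrm{str}}$ the part supported on straight ones. By Lemma~\ref{lem:crooked} every crooked simple $2$-chain is a cycle, so $a^{\mathrm{cr}}$ is a cycle and hence so is $a^{\mathrm{str}} = a - a^{\mathrm{cr}}$. Property~$(*_2)$ enters here: by Proposition~\ref{prop:crooked} it is equivalent to every crooked simple $2$-chain being a boundary, so the finite integral combination $a^{\mathrm{cr}}$ is a boundary. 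Thus $[a] = [a^{\mathrm{str}}]$ in $H_2(X)$, and it remains to show that a cycle supported on straight $2$-chains is itself a boundary.

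Second, I would use geodeticy to collapse all straight $2$-chains sharing a given pair of endpoints. Unwinding the cycle condition~\eqref{eq:cycle} for $n = 2$, the chain $a^{\mathrm{str}}$ satisfies, for every pair of distinct points $x_0, x_2$, that the coefficients of the straight $2$-chains $x_0 \bar{z} x_2$ (with $z$ strictly between $x_0$ and $x_2$) sum to zero. By Proposition~\ref{prop:order}, geodeticy makes $\preceq_{x_0, x_2}$ a total order on these intermediate points; so for any two of them with $z \preceq_{x_0, x_2} z'$ one has $x_0 \bar{z} z'$, and together with $x_0 \bar{z'} x_2$ Lemma~\ref{lem:metric} yields the globally straight $3$-chain $x_0 \overline{z z'} x_2$. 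Its boundary is $d(x_0 z z' x_2) = x_0 z x_2 - x_0 z' x_2$, so any two straight $2$-chains with the same endpoints are homologous. Fixing one reference chain per endpoint pair and using that the relevant coefficients sum to zero, I would conclude that $a^{\mathrm{str}}$ is a boundary, whence $[a] = 0$.

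For the ``in particular'' clause, a uniquely geodesic space is geodetic by Proposition~\ref{prop:geodetic}, and being geodesic it is strongly Menger, hence has property~$(**)$ and a fortiori property~$(*_2)$ by Proposition~\ref{prop:prop*}; so it falls under the first statement.

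The main obstacle is the second step, and specifically the necessity of totality in Proposition~\ref{prop:order}. The step relating $x_0 z x_2$ to $x_0 z' x_2$ requires the bridging $3$-chain $x_0 z z' x_2$ to be globally straight, which fails precisely when $z$ and $z'$ are $\preceq_{x_0, x_2}$-incomparable---the configuration that geodeticy rules out. Property~$(*_2)$ only removes the crooked part; all the homological cancellation among straight chains is forced by the total ordering of intermediate points, so this is exactly where the geodeticy hypothesis does its essential work.
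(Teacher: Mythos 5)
Your proof is correct, but note that the paper does not actually contain an argument to compare against: its ``proof'' of Theorem~\ref{thm:LS} is the single remark that the proof of~\cite[Thm.~7.19]{LS} already establishes this slightly more general statement. So what you have produced is a genuine, self-contained proof, built from the paper's own toolkit, of a result the paper only cites. All the steps check out. The splitting $a = a^{\mathrm{cr}} + a^{\mathrm{str}}$ makes sense because a simple $2$-chain is either straight or crooked at~$1$; Lemma~\ref{lem:crooked} makes $a^{\mathrm{cr}}$ a cycle, hence $a^{\mathrm{str}}$ too, and property~$(*_2)$ makes $a^{\mathrm{cr}}$ a boundary via (the easy direction of) Proposition~\ref{prop:crooked}. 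For the straight part, a straight simple $2$-chain necessarily has distinct endpoints (straightness of $(x_0, z, x_0)$ would force $z = x_0$), so the $n=2$ cycle condition~\eqref{eq:cycle} really does decompose into one zero-sum condition for each ordered pair of distinct endpoints; your bridging chain $x_0 z z' x_2$ is simple and globally straight by Proposition~\ref{prop:order} and Lemma~\ref{lem:metric}, hence straight at both interior indices, so its boundary is $x_0 z x_2 - x_0 z' x_2$ and the telescoping against a reference chain $z_*$ is valid because the coefficients sum to zero. The reduction of the ``in particular'' clause is also right: uniquely geodesic implies geodetic (Proposition~\ref{prop:geodetic}), and geodesic implies strongly Menger, which gives $(**)$ and hence $(*_2)$ (Proposition~\ref{prop:prop*}). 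Structurally, your argument is the degree-$2$ analogue of the paper's proof of Lemma~\ref{lem:main}: your reference vertex $z_*$ plays the role of the cone point $r$ coned off there. The instructive difference is that you need no cut-freeness: the direct-sum decomposition of Proposition~\ref{prop:decomp}, which is exactly where cut-freeness enters Theorem~\ref{thm:main}, is replaced in degree~$2$ by the elementary observation that the cycle condition splits over endpoint pairs. This makes transparent why the present statement can drop that hypothesis, at the price of working only in degree~$2$.
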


\begin{proof}
Although the statement is slightly more general than that of~\cite[Thm.~7.19]{LS}, the proof there actually proves it.
\end{proof}

\section{Riemannian manifolds}
\label{sec:riem}

We keep the terminology of the preceding sections, so a ``local geodesic'' (resp.\ ``geodesic'') denotes what in Riemannian geometry is generally called a ``geodesic'' (resp.\ ``minimizing geodesic'').
Complete Riemannian manifolds are geodesic (Hopf--Rinow).

Let $M$ be a complete Riemannian manifold, $x \in M$, and $\gamma \colon \RR \to M$ a local geodesic with $\gamma(0) = x$.
If $t \coloneqq \sup \{ s \geq 0 \mid \gamma \text{ is minimizing between $x$ and $\gamma(s)$} \}$ is finite, then one says that $\gamma(t)$ is the \define{cut-point} of $x$ along $\gamma$.
The \define{cut-locus} of $x$ is the set of cut-points of $x$ along local geodesics through $x$, and the cut-locus of $M$ is the set of pairs formed by a point and one of its cut-points.
For details, we refer to~\cite[Ch.~XIII.2]{DC} and~\cite[Ch.~2.1]{K}.

For complete Riemannian manifolds, several of the conditions introduced in~\cite{LS} and this article turn out to be equivalent.

\begin{theorem}\label{thm:riem}
In a complete Riemannian manifold $M$, the following properties are equivalent:
\begin{enumerate}
\item
$M$ has empty cut-locus,
\item
$M$ is cut-free,
\item
$M$ has property~$(*{*}*)$,
\item
$M$ is geodetic.
\end{enumerate}
\end{theorem}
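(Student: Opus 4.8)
The plan is to reduce all four conditions to the single analytic statement that $M$ has empty cut-locus, namely condition~(1). Throughout I use that a complete Riemannian manifold is geodesic (Hopf--Rinow), and hence in particular Menger-convex. First I would settle (1)$\Leftrightarrow$(2): having empty cut-locus means exactly that every local geodesic stays minimizing on its whole domain, that is, every local geodesic is a geodesic; by Proposition~\ref{prop:glob-straight} this is, for a geodesic space, equivalent to cut-freeness. Next, (2)$\Rightarrow$(3) is immediate from Proposition~\ref{prop:prop***}, since $M$ is both Menger-convex and cut-free. What remains is to tie in (3) and (4), and this is where the genuinely Riemannian work lies.

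For (4) I would first dispatch the easy implication (1)$\Rightarrow$(4). If two distinct minimizing geodesics joined $p$ to $q$, then prolonging one of them slightly beyond $q$ would produce a path (the other geodesic followed by the prolongation) of the same length but with a corner at $q$, hence not minimizing; the prolonged geodesic would then stop minimizing at $q$, contradicting empty cut-locus. So $M$ is uniquely geodesic, which by Proposition~\ref{prop:geodetic} is the same as geodetic. The converse (4)$\Rightarrow$(1) is the first hard step. At a cut point $q$ of $p$ one has, by the standard structure theory of the cut-locus, either a second minimizing geodesic to $q$ (impossible under unique geodesicy) or $q$ is conjugate to $p$. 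To exclude the conjugate case I would argue topologically: unique minimizing geodesics make $\exp_p$ a continuous bijection from the set $\overline{D_p}\subseteq T_pM$ of tangent vectors realized by minimizing geodesics (the segment domain) onto $M$, surjective by completeness and injective by uniqueness, and with continuous inverse since minimizing geodesics depend continuously on their endpoints once they are unique; hence $\exp_p$ is a homeomorphism. Continuity of the cut-distance function makes $\overline{D_p}$ a topological manifold with boundary, and a nonempty cut-locus makes that boundary nonempty, so $M$ would be homeomorphic to a manifold-with-nonempty-boundary, contradicting that $M$ is boundaryless (invariance of domain/boundary). Hence the cut-locus is empty.

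Finally I would connect (3) by proving (3)$\Rightarrow$(1) in contrapositive form: a nonempty cut-locus forces property~$(*{*}*)$ to fail. The model is the Riemannian circle from the remark after Proposition~\ref{prop:prop*}, where the crooked $3$-chain $(0,t,2t,3t)$ with $\pi/2<t<2\pi/3$ admits no straight interpolant precisely because the pairs $(0,2t)$ and $(t,3t)$ lie just past the relevant cut-distance. Given a cut point, I would similarly place four points along, and just beyond, a local geodesic, calibrated so that the chain is crooked at both interior vertices while the two crookedness requirements on any candidate interpolant $z$ between them become jointly unsatisfiable. The main obstacle is exactly this calibration: purely local configurations near a cut point (two minimizing geodesics emanating from a common endpoint) \emph{do} admit interpolants, so one must work at the correct global scale, where the failure of minimality past the cut point is strong enough to exclude every~$z$; controlling the relevant distances in a general manifold, rather than the homogeneous circle, is the delicate point, on a par with the conjugate-point exclusion needed for (4)$\Rightarrow$(1).
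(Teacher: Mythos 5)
Your handling of (1)$\Leftrightarrow$(2) and (2)$\Rightarrow$(3) coincides with the paper's (via Propositions~\ref{prop:glob-straight} and~\ref{prop:prop***}), and your treatment of (4) is correct but takes a different route: for (4)$\Rightarrow$(1) the paper simply invokes Klingenberg's theorem~\cite[Thm.~2.1.14]{K} that the cut-locus of a point is the closure of the set of points joined to it by two distinct minimizing geodesics, whereas you argue topologically that $\exp_p$ restricted to the segment domain is a homeomorphism onto $M$ and then apply invariance of boundary. Your argument is workable modulo the standard facts you cite (continuity of the cut-distance, continuity of the inverse of $\exp_p$ under unique geodesics), but it is considerably heavier than the one-line citation, and it re-proves a special case of the very structure theorem the paper uses.

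The genuine gap is (3)$\Rightarrow$(1). With what you actually establish --- (1)$\Leftrightarrow$(2), (1)$\Leftrightarrow$(4), and (1)$\Rightarrow$(3) --- property $(*{*}*)$ is only implied by the other three conditions; nothing you prove shows it implies any of them, so the four conditions are not shown to be equivalent. You correctly identify the needed contrapositive and even the shape of the construction, but you leave the ``calibration'' as an acknowledged obstacle, and that calibration is precisely the content of the paper's proof. The paper's configuration is: along a (local) geodesic $\gamma$ through the cut-locus, choose points in the order $x_0$, cut-point of $x_2$, $x_1$, cut-point of $x_3$, cut-point of $x_0$, $x_2$, cut-point of $x_1$, $x_3$ (cut-points of $x_2$ and $x_3$ taken along $\gamma$ reversed, those of $x_0$ and $x_1$ along $\gamma$). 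Then $x_0 \check{x}_1 \check{x}_2 x_3$, and since $x_2$ lies strictly before the cut-point of $x_1$, the arc of $\gamma$ from $x_1$ to $x_2$ is the \emph{unique} minimizing geodesic between them, so any candidate $z$ with $x_1 \bar{z} x_2$ must lie on $\gamma$. For such a $z$, crookedness of $(x_0, x_1, z)$ at $x_1$ forces $z$ to lie strictly past the cut-point of $x_0$, while crookedness of $(z, x_2, x_3)$ at $x_2$ forces $z$ to lie strictly before the cut-point of $x_3$; because the cut-point of $x_3$ precedes that of $x_0$ in the chosen ordering, these two constraints are incompatible, so no interpolant exists and $(*{*}*)$ fails. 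The key idea you are missing is thus twofold: use unique minimality on $[x_1, x_2]$ to confine $z$ to $\gamma$, and interleave the four points with the four relevant cut-points so that the two crookedness requirements become disjoint intervals on $\gamma$. Without this (or an equivalent) construction, the proposal does not prove the theorem.
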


\begin{proof}
(1)$\Rightarrow$(2).
A complete Riemannian manifold has empty cut-locus if and only if all its local geodesics are geodesics.
Since complete Riemannian manifolds are geodesic (Hopf--Rinow theorem), the conclusion follows from Proposition~\ref{prop:glob-straight}.

(2)$\Rightarrow$(3) follows from Proposition~\ref{prop:prop***}.

(3)$\Rightarrow$(1).
We prove the contrapositive.
If~$M$ has a non-empty cut-locus, then one can find along a geodesic $\gamma$ the following configuration in that order:
$x_0$, cut-point of $x_2$, $x_1$, cut-point of $x_3$, cut-point of $x_0$, $x_2$, cut-point of $x_1$, $x_3$.
Then, $x_0 \check{x}_1 \check{x}_2 x_3$ and $\gamma$ is uniquely minimizing on $[x_1, x_2]$.
Therefore, if $x_1 \bar{y} x_2$, then $y$ is on $\gamma$, but cannot be both after the cut-point of $x_0$ and before that of $x_3$.
Therefore, $x_0 \bar{y} x_2$ or $x_1 \bar{y} x_3$.

(1)$\Rightarrow$(4).
In a complete Riemannian manifold with empty cut-locus, each pair of points is connected by a unique geodesic, see for instance~\cite[Cor.~XIII.2.8]{DC}.

(4)$\Rightarrow$(1).
The cut-locus of a point is the closure of the set of points that can be connected to it by two distinct minimizing geodesics (see~\cite[Thm.~2.1.14]{K}).
If~$M$ is geodetic, the latter set is empty, and so is its closure.
\end{proof}

\begin{corollary}
If $M$ is a complete Riemannian manifold with empty cut-locus, then $H_n(M) = 0$ for $n \neq 0$.
\end{corollary}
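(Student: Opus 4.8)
The plan is to reduce this statement to Theorem~\ref{thm:main} by verifying that $M$ satisfies its three hypotheses: Menger-convexity, geodeticy, and cut-freeness. Essentially all of the substantive work has already been carried out in Theorem~\ref{thm:riem} together with the standard facts about complete Riemannian manifolds, so the argument should amount to a short assembly of these ingredients rather than any new computation.

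First I would establish Menger-convexity. Since $M$ is a complete Riemannian manifold, the Hopf--Rinow theorem guarantees that it is geodesic, and a geodesic space is Menger-convex (as recorded in the remark following Definition~\ref{def:Menger}). Next I would obtain geodeticy and cut-freeness directly from the hypothesis of empty cut-locus by appealing to the equivalences in Theorem~\ref{thm:riem}: the implication (1)$\Rightarrow$(2) gives that $M$ is cut-free, and the implication (1)$\Rightarrow$(4) gives that $M$ is geodetic. Thus the single Riemannian condition of empty cut-locus simultaneously supplies both of the remaining metric hypotheses.

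Having verified that $M$ is Menger-convex, geodetic, and cut-free, I would conclude by invoking Theorem~\ref{thm:main}, which yields $H_n(M) = 0$ for $n \neq 0$. I do not expect any genuine obstacle at this stage: the content of the result is entirely contained in Theorem~\ref{thm:riem}, which translates the Riemannian notion of empty cut-locus into the combinatorial/metric notions of cut-freeness and geodeticy, and in the acyclicity Theorem~\ref{thm:main}. The corollary is then simply their composition, with Hopf--Rinow supplying the one additional hypothesis (Menger-convexity) that Theorem~\ref{thm:riem} does not itself produce.
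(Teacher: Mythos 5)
Your proposal is correct and is exactly the paper's intended argument: the corollary follows by combining Theorem~\ref{thm:riem} (empty cut-locus gives cut-freeness and geodeticy) with Hopf--Rinow (geodesic, hence Menger-convex) and then invoking Theorem~\ref{thm:main}.
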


\vspace*{5mm}
\noindent
\parbox[t]{21em}
{\scriptsize
Beno{\^i}t Jubin\\
Sorbonne Universit{\'e}s, UPMC Univ Paris 6\\
Institut de Math{\'e}matiques de Jussieu\\
F-75005 Paris France\\
e-mail: benoit.jubin@imj-prg.fr
}
\end{document}